\newcommand{\ie}{\emph{i.e.}}
\newcommand{\eg}{\emph{e.g.}}
\newcommand{\cf}{\emph{cf.}}
\newcommand{\Real}{\mathbb{R}}
\newcommand{\Nat}{\mathbb{N}}
\newcommand{\dist}{\mathop{\mathrm{dist}}\nolimits}
\newcommand{\Dom}{\mathfrak{D}}
\newcommand{\eps}{\varepsilon}
\newcommand{\sii}{L^2}
\newtheorem{Theorem}{Theorem}
\newtheorem{Proposition}{Proposition}
\newtheorem{Corollary}{Corollary}
\newtheorem{Conjecture}{Conjecture}
\newtheorem{Lemma}{Lemma}
\definecolor{DarkGreen}{rgb}{0,0.5,0.1}
\newcommand\soutD{\bgroup\markoverwith
{\textcolor{DarkGreen}{\rule[.5ex]{2pt}{1pt}}}\ULon}
\newcommand{\Hm}[1]{\leavevmode{\marginpar{\tiny%
$\hbox to 0mm{\hspace*{-0.5mm}$\leftarrow$\hss}%
\vcenter{\vrule depth 0.1mm height 0.1mm width \the\marginparwidth}%
\hbox to 0mm{\hss$\rightarrow$\hspace*{-0.5mm}}$\\\relax\raggedright
#1}}}
\begin{document}
%
%-------%
% TITLE %
%-------%
%------------------------------------------%
%------------------------------------------%
\title{\textbf{\Large Hardy inequalities in globally twisted waveguides}}
\author{Philippe Briet$^{a}$,
Hiba Hammedi$^{a}$
and David Krej\v{c}i\v{r}\'{\i}k$^{b}$}
\date{\small
\emph{
\begin{quote}
\begin{itemize}
\item[$a)$]
Aix-Marseille Universit\' e, CNRS, CPT, UMR 7332,
Case 907, 13288 Marseille \&
Universit\' e de Toulon, CNRS, CPT, UMR 7332
83957, La Garde, France;
briet@cpt.univ-mrs.fr, hammedi@cpt.univ-mrs.fr.%
\\
\item[$b)$]
Department of Theoretical Physics, Nuclear Physics Institute ASCR,
25068 \v{R}e\v{z}, Czech Republic;
krejcirik@ujf.cas.cz.%
\end{itemize}
\end{quote}
}
\medskip
11 June 2014 }

\maketitle

\begin{abstract}
\noindent
We establish various Hardy-type inequalities
for the Dirichlet Laplacian
in perturbed periodically twisted
tubes of non-circular cross-sections.
We also state conjectures about the existence
of such inequalities in more general regimes,
which we support by heuristic and numerical arguments.
%
%\smallskip
%\begin{itemize}
%\item[\textbf{Keywords:}]
%quantum waveguides, twisted tubes, Dirichlet Laplacian, Hardy inequality
%\item[\textbf{MSC (2010):}]
%Primary: 35R45; 81Q10;
%Secondary: 35J10; 58J50; 78A50
%\end{itemize}
%
\end{abstract}
%

%------------------------------------------%
%------------------------------------------%
%
%
%---------------------%
\section{Introduction}\label{Sec.Intro}
%---------------------%
\emph{``I have never done anything `useful'.
No discovery of mine has made, or is likely to make,
directly or indirectly, for good or ill,
the least difference to the amenity of the world.''}
This is a quotation from a 1940 essay~\cite{Hardy-apology}
by British mathematician G.~H.~Hardy.
Despite of this self-identification as a pure mathematician,
his work has found important applications in physics,
including the celebrated \emph{Hardy inequality}~\cite{Hardy_1920}
\begin{equation}\label{Hardy.1D}
  \forall \varphi\in H_0^1((0,\infty))
  \,, \qquad
  \int_0^\infty |\varphi'(x)|^2 \, dx
  \geq \frac{1}{4} \int_0^\infty \frac{|\varphi(x)|^2}{|x|^2} \, dx
  \,.
\end{equation}
For instance, using~\eqref{Hardy.1D} in the radial component
of the three-dimensional Laplacian, the inequality directly explains
the \emph{stability} of hydrogen-type atoms in quantum mechanics.

In a different quantum-mechanical context,
Ekholm, Kova\v{r}\'ik and the last author
employed~\eqref{Hardy.1D} to prove in~\cite{EKK}
certain stability of the spectrum of the Dirichlet Laplacian
in \emph{locally twisted tubes}
\begin{equation}\label{tube}
  \Omega :=
  \left\{
  \left.
  \begin{pmatrix}
    1 & 0 & 0 \\
    0 & \cos\theta(x_1) & \sin\theta(x_1) \\
    0 & -\sin\theta(x_1) & \cos\theta(x_1)
  \end{pmatrix}
  \begin{pmatrix}
    x_1 \\ x_2 \\ x_3
  \end{pmatrix}
  \ \right| \
  (x_1,x_2,x_3) \in \Real \times \omega
  \right\}
  .
\end{equation}
Here the cross-section~$\omega$ is
an arbitrary bounded open connected set in~$\Real^2$
and $\theta:\Real\to\Real$ is the twisting angle.
Assuming that~$\dot\theta$ is a compactly supported continuous function
with bounded derivative, the authors of~\cite{EKK} derived
a waveguide-type analogue of~\eqref{Hardy.1D}, namely,
\begin{equation}\label{Hardy.tube}
  \forall \psi\in H_0^1(\Omega)
  \,, \qquad
  \int_\Omega |\nabla\psi(x)|^2 \, dx
  - E_1 \int_\Omega |\psi(x)|^2 \, dx
  \geq c_H \int_\Omega \frac{|\psi(x)|^2}{1+|x|^2} \, dx
  \,.
\end{equation}
Here~$E_1$ denotes the first eigenvalue of
the Dirichlet Laplacian in~$\omega$, $-\Delta_D^\omega$.
The constant~$c_H$ is positive if, and only if,
$\dot\theta$~is not identically zero and~$\omega$ is not rotationally
invariant with respect to the origin in~$\Real^2$.
If~$\Omega$ is twisted \emph{locally} 
in the sense that~$\dot\theta$ vanishes at infinity,
then the spectrum of the Dirichlet Laplacian in~$\Omega$, $-\Delta_D^\Omega$,
equals $[E_1,\infty)$.
Consequently, \eqref{Hardy.tube}~implies that this spectrum is stable
against small short-range perturbations of the Laplacian
whenever the tube is locally twisted so that $c_H>0$.
This is the spectral stability, which has applications
to quantum transport in waveguide-shaped nanostructures.

Various generalisations of the Hardy inequality~\eqref{Hardy.tube}
has been established in \cite{K6-with-erratum,KZ1,KR}.
In addition to the quantum-waveguide context,
inequality~\eqref{Hardy.tube} has been also applied to the study of
the large-time behaviour of the heat equation in twisted tubes
in \cite{KZ1,Grillo-Kovarik-Pinchover_2014}.
Other effects of twisting has been studied in
\cite{EKov_2005,K3,Kovarik-Sacchetti_2007,EKov_2005,
Briet-Kovarik-Raikov-Soccorsi_2009,KZ2,KKolb,Briet-Kovarik-Raikov_2014}.

In this paper, we are interested in the existence
of Hardy inequalities in situations
when the tube~\eqref{tube} exhibits a twist
which is not necessarily local,
\ie~$\dot\theta$ may not vanish at infinity.
Indeed, throughout this paper, we assume that
\begin{equation}\label{angle}
  \dot\theta(x_1) = \beta + \eps(x_1)
  \,,
\end{equation}
where~$\beta$ is a real constant and $\eps:\Real\to\Real$ is a
(not necessarily small) bounded function
(typically vanishing at infinity).

If $\eps=0$, then~$\Omega$ is periodically twisted
and the spectral problem can be solved by a Floquet-type decomposition.
It is shown in \cite{EKov_2005,Briet-Kovarik-Raikov-Soccorsi_2009}
that, in this case, $\sigma(-\Delta_D^{\Omega}) = [\lambda_1,\infty)$,
where $\lambda_1$ is the lowest eigenvalue of $-\Delta_D^\omega -
\beta^2 \partial_\tau^2$ in $\sii(\omega)$,
with $\partial_\tau:=x_3\partial_2-x_2\partial_3$
being the transverse angular derivative.
We have the variational characterisation
\begin{equation}\label{lambda}
  \lambda_1 = \inf_{\chi \in C_0^\infty(\omega)\setminus\{0\}}
  \frac{\|\nabla'\chi\|_{\sii(\omega)}^2
  + \beta^2 \;\! \|\partial_\tau\chi \|_{\sii(\omega)}^2}
  {\|\chi\|_{\sii(\omega)}^2}
  \,,
\end{equation}
where $\nabla':=(\partial_2,\partial_3)$
stands for the transverse gradient.
Here and in the sequel,
we keep the coordinate notation introduced in~\eqref{tube},
writing $x':=(x_2,x_3) \in \omega$
for the ``transverse'' coordinates,
while $x_1 \in \Real$ stands for the ``longitudinal'' coordinate.

For $\eps\not=0$ but vanishing at infinity, we always have
(\cf~\cite[Sec.~4.1]{Briet-Kovarik-Raikov-Soccorsi_2009})
\begin{equation}\label{essential}
  \sigma_\mathrm{ess}(-\Delta_D^{\Omega}) = [\lambda_1,\infty)
  \,,
\end{equation}
however, there might be also discrete eigenvalues below~$\lambda_1$.
Indeed, it is shown in \cite{EKov_2005} that the discrete spectrum
is not empty provided that the twist is locally ``slowed down'', \ie,
\begin{equation}\label{discrete}
  \int_\Real \big(\dot\theta^2(x_1) - \beta^2\big) \, dx_1 < 0
  \,.
\end{equation}
Recalling~\eqref{angle},
this condition is for instance true if
$\beta\eps$ is non-positive and not identically equal to zero
and~$\eps$ is small in the supremum norm
with respect to~$\beta$.
Further properties of the discrete spectrum are
studied in \cite{Briet-Kovarik-Raikov-Soccorsi_2009}.

Our objective is to show that there are Hardy-type inequalities
\begin{equation}\label{Hardy.per}
  -\Delta_D^{\Omega} - \lambda_1 \geq \rho(\cdot)
  \,,
\end{equation}
with a non-trivial function $\rho:\Real\to[0,\infty)$
in opposite regimes to~\eqref{discrete}.
In particular, there is no discrete spectrum.
Note that~\eqref{Hardy.tube} is a version of~\eqref{Hardy.per}
if $\beta=0$, since $\lambda_1=E_1$ in this case.
More precisely, we make the following conjectures.

\begin{Conjecture}\label{Conj.sign}
\eqref{Hardy.per} holds if $\beta\eps$ is non-negative
and~$\eps$ is not identically equal to zero.
\end{Conjecture}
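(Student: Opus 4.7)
The plan is to adapt the ground-state substitution of~\cite{EKK} to the periodic background. In the straightened coordinates on $\Real\times\omega$, the quadratic form of $-\Delta_D^{\Omega}$ reads
\begin{equation*}
q[\psi]=\int\Bigl[\bigl|(\partial_1-\dot\theta\partial_\tau)\psi\bigr|^2+|\nabla'\psi|^2\Bigr]dx\,,
\end{equation*}
and we substitute $\psi=\chi_1 u$, where $\chi_1>0$ is the normalised ground state of $-\Delta'-\beta^2\partial_\tau^2$ associated with~$\lambda_1$. A routine integration by parts in $x_1$ and~$\tau$, combined with the transverse eigenvalue equation, produces the Bochner-type identity
\begin{equation*}
q[\psi]-\lambda_1\|\psi\|^2=\int\chi_1^2\Bigl[\bigl|(\partial_1-\dot\theta\partial_\tau)u\bigr|^2+|\nabla'u|^2\Bigr]dx+\int\Bigl[-(2\beta\eps+\eps^2)\chi_1\partial_\tau^2\chi_1+\dot\eps\,\chi_1\partial_\tau\chi_1\Bigr]|u|^2\,dx\,.
\end{equation*}

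The crucial observation is that a slice-wise integration by parts in $\tau$ gives
\begin{equation*}
\int_\omega\Bigl[-(2\beta\eps+\eps^2)\chi_1\partial_\tau^2\chi_1+\dot\eps\,\chi_1\partial_\tau\chi_1\Bigr]dx'=\bigl(2\beta\eps(x_1)+\eps(x_1)^2\bigr)\,\|\partial_\tau\chi_1\|^2_{\sii(\omega)}\,,
\end{equation*}
since $\int_\omega\partial_\tau(\chi_1^2)\,dx'=0$. Under the hypothesis $\beta\eps\ge 0$ with $\eps\not\equiv 0$, this is a nonnegative and nontrivial function of~$x_1$, strictly positive on $\{\eps\neq 0\}$ provided $\omega$ is not rotationally symmetric (so that $\partial_\tau\chi_1\not\equiv 0$). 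This is the expected source of the Hardy gain, and the natural profile for $\rho$ should be a positive multiple of~$2\beta\eps+\eps^2$.

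To turn this slice-averaged positivity into an honest Hardy inequality, I would decompose $u(x_1,x')=\varphi(x_1)+v(x_1,x')$ with $\int_\omega\chi_1^2\,v\,dx'=0$ for every~$x_1$. The parallel mode~$\varphi$ then carries precisely the good potential contribution $\int(2\beta\eps+\eps^2)\|\partial_\tau\chi_1\|^2_{\sii(\omega)}|\varphi|^2\,dx_1$, while the orthogonal component~$v$ is controlled by the transverse spectral gap $\lambda_2-\lambda_1>0$ in the $\chi_1^2$-weighted norm. The chief obstacle is the sign-indefinite cross-terms between~$\varphi$ and~$v$: one arises from $|(\partial_1-\dot\theta\partial_\tau)u|^2$ through the identity $\int_\omega\chi_1^2\partial_\tau v\,dx'=-2\int_\omega\chi_1\partial_\tau\chi_1\,v\,dx'\neq 0$ and has the shape $\dot\theta\,\partial_1\overline\varphi\int_\omega\chi_1\partial_\tau\chi_1\,v\,dx'$, the other involves~$\dot\eps$. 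Absorbing them by Cauchy--Schwarz consumes a fraction of both the spectral-gap bound and the Hardy gain; showing that what remains is still strictly positive for \emph{arbitrary} nonvanishing~$\eps$, rather than only for small~$\eps$ (where the argument is straightforwardly perturbative), is the difficulty that keeps the statement at the level of a conjecture.
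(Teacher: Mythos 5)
The statement you are trying to prove is stated in the paper as a \emph{conjecture}, and the paper itself does not prove it in full generality: it only establishes it under additional smallness assumptions (Theorem~\ref{Thm.intro}, via Theorems~\ref{Thm.local}, \ref{Thm.local.better} and~\ref{Thm.global}). Your proposal, as you yourself concede in the last paragraph, stops at exactly the same point: the sign-indefinite cross terms between the longitudinal mode~$\varphi$ and the orthogonal component~$v$ can be absorbed by Cauchy--Schwarz only at the price of a fixed fraction of the transverse gap and of the gain $(2\beta\eps+\eps^2)\|\partial_\tau\chi_1\|^2_{\sii(\omega)}$, which closes the argument only for small~$\eps$ (the paper's route in addition needs $\beta$ small, \cf~\eqref{Ass}). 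So what you have is a correct identification of the mechanism plus a programme that, if completed, would reproduce a perturbative result of the same nature as Theorem~\ref{Thm.intro}, not a proof of the conjecture. Your two displayed identities are correct (the ground-state substitution combined with the eigenvalue equation for~$\chi_1$ does produce the effective potential $-(2\beta\eps+\eps^2)\chi_1\partial_\tau^2\chi_1+\dot\eps\,\chi_1\partial_\tau\chi_1$, and the slice-wise integration by parts kills the $\dot\eps$-term and yields $(2\beta\eps+\eps^2)\|\partial_\tau\chi_1\|^2_{\sii(\omega)}$), but note two caveats: your identity differentiates~$\eps$, whereas the conjecture only assumes~$\eps$ bounded, so a regularisation step would be needed; and the strict positivity on $\{\eps\neq0\}$ requires~$\omega$ not rotationally invariant, an assumption absent from the conjecture as stated but unavoidable (for rotationally invariant~$\omega$ the twist is ineffective and no inequality~\eqref{Hardy.per} with nontrivial~$\rho$ can hold), and indeed present in Theorem~\ref{Thm.intro}.

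It is worth contrasting your route with the paper's partial proof, which is genuinely different in its technical realisation. The paper never differentiates~$\eps$: starting from~\eqref{form.shifted} and~\eqref{but.one}, it keeps the manifestly non-negative term $2(\eps\phi\partial_\tau\chi,\beta\phi\partial_\tau\chi)$ and absorbs the single indefinite term by Young's inequality~\eqref{Young}, which already gives the local Hardy inequality~\eqref{HI.local} under the mere smallness condition~\eqref{small.hypotheses.strict}, with the weight $\beta\eps(\partial_\tau\chi/\chi)^2$ rather than your slice-averaged $(2\beta\eps+\eps^2)\|\partial_\tau\chi\|^2_{\sii(\omega)}$. To globalise, instead of your decomposition $u=\varphi+v$ with the gap $\lambda_2-\lambda_1$ in the $\chi_1^2$-weighted space, the paper uses the auxiliary weighted transverse problem~\eqref{mu} (Lemma~\ref{Lem.compact} and Proposition~\ref{Prop.mu}) together with the brute estimate leading to~\eqref{Q12}, and then the classical one-dimensional inequality~\eqref{Hardy.1D}. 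Your mode-decomposition strategy is closer to the original treatment of the locally twisted case~\eqref{Hardy.tube} and would likely give a cleaner effective one-dimensional picture, but it faces exactly the same obstruction: beyond the perturbative regime no one currently knows how to dominate the $\dot\theta$-coupled cross terms, which is precisely why the statement remains a conjecture in the paper.
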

\begin{Conjecture}\label{Conj.large}
\eqref{Hardy.per} holds if we replace $\eps \mapsto \alpha \eps$,
$\eps$~is not identically equal to zero
and the coupling parameter~$\alpha$
is sufficiently large in absolute value.
\end{Conjecture}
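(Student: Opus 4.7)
The plan is a one-dimensional reduction of the quadratic form followed by a one-dimensional Hardy inequality, with the largeness of $|\alpha|$ used both to make the effective 1D potential sufficiently positive and to control the coupling between the longitudinal and transverse modes. In straightened coordinates the form of $-\Delta_D^\Omega$ reads $Q[\psi]=\|(\partial_1-\dot\theta\partial_\tau)\psi\|^2+\|\nabla'\psi\|^2$ on $H_0^1(\Real\times\omega)$, with $\dot\theta=\beta+\alpha\eps$. Let $\chi_1\in H_0^1(\omega)$ be the positive $\sii(\omega)$-normalised ground state of $-\Delta_D^\omega-\beta^2\partial_\tau^2$ with eigenvalue $\lambda_1$, and decompose any $\psi\in C_0^\infty(\Real\times\omega)$ as $\psi(x_1,x')=u(x_1)\chi_1(x')+R(x_1,x')$ with $\int_\omega R(x_1,\cdot)\;\!\chi_1\,dx'=0$ for every $x_1$. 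A direct computation using the eigenvalue equation for $\chi_1$, the Dirichlet condition $\chi_1|_{\partial\omega}=0$, the antisymmetry of $\partial_\tau$ and integration by parts then yields an identity
\[
Q[\psi]-\lambda_1\|\psi\|^2 \;=\; \int_\Real\bigl(|u'|^2+V_\alpha u^2\bigr)dx_1 \;+\; B[R] \;+\; C[u,R],
\]
where $V_\alpha(x_1):=c_\tau\bigl(\dot\theta(x_1)^2-\beta^2\bigr)$ with $c_\tau:=\|\partial_\tau\chi_1\|_{\sii(\omega)}^2>0$, the form $B[R]$ is the transverse contribution bounded below by $(\lambda_2-\lambda_1)\|R\|^2$ modulo $\dot\theta$-dependent corrections (with $\lambda_2$ the second eigenvalue of the fibre operator), and $C[u,R]$ collects the cross terms, which are linear in $u,u'$ coupled to moments of $R$ against $\partial_\tau\chi_1$ and $\partial_\tau^2\chi_1$.

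The first main step is to absorb $C[u,R]$ into the two leading positive pieces by Cauchy--Schwarz with a small parameter: the cross terms are at most linear in $\dot\theta$ (hence at most linear in $|\alpha|$), whereas the positive terms grow like $\alpha^2$ from $V_\alpha$ while the transverse gap contribution is bounded from below independently of $\alpha$, so for $|\alpha|$ large only a small prefactor $\delta\in(0,1)$ of the leading terms is lost. The second main step is a one-dimensional Hardy inequality
\[
-\frac{d^2}{dx_1^2}+V_\alpha \;\geq\; \frac{c(\alpha)}{1+x_1^2}
\]
in the quadratic-form sense on $H^1(\Real)$, with some $c(\alpha)>0$. Although $V_\alpha$ is not pointwise non-negative, for $|\alpha|$ large its positive part behaves as $c_\tau\alpha^2\eps^2$ while its negative part is uniformly bounded by $\beta^2 c_\tau$ and concentrated on the set $\{x_1:|\alpha\eps(x_1)|\leq 2|\beta|\}$, which shrinks as $|\alpha|\to\infty$; a standard Hardy-with-nontrivial-potential argument (in the spirit of the one used in~\cite{EKK}) then applies. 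Combining the two steps with the transverse bound and the identity $\|\psi\|^2=\|u\|_{\sii(\Real)}^2+\|R\|_{\sii(\Real\times\omega)}^2$ yields~\eqref{Hardy.per} with $\rho(x_1)=c(1+x_1^2)^{-1}$ for some $c>0$ depending on $\alpha$.

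The main obstacle is the systematic control of the cross terms $C[u,R]$: after integration by parts in $x_1$, some of them acquire a factor of $\ddot\theta=\alpha\dot\eps$, which is a priori large and sign-indefinite, so the bookkeeping must be careful enough that every residual term of order $|\alpha|$ is genuinely absorbed into the $\alpha^2$-terms; this may force an extra regularity assumption on $\eps$. A secondary difficulty is the one-dimensional Hardy step: the classical argument for non-negative compactly supported potentials has to be adapted to $V_\alpha$, whose small negative dip near the zeros of $\eps$ must be dominated by the kinetic energy and by the large positive bumps on $\{|\eps|>0\}$. Making this quantitative (with a concrete threshold $\alpha_0$) is expected to be the technical heart of the proof.
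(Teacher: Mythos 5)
First, be aware that the statement you are trying to prove is stated in the paper as a \emph{conjecture}: the authors explicitly say they could not prove it, and the only support they offer is heuristic --- the thin-width norm-resolvent limit \eqref{effective.per}, the positivity of the effective one-dimensional operator \eqref{H.1D} for large $|\alpha|$ (Proposition~\ref{Prop.1D}, proved in Appendix~\ref{App} by Neumann bracketing and a strong-resolvent limit), geometric considerations, and numerics. Your plan is essentially an attempt to make that thin-width heuristic rigorous at \emph{fixed} cross-section, and the place where it breaks is exactly the place the paper identifies as the open difficulty. Concretely, expanding $\|\partial_1\psi-\dot\theta\partial_\tau\psi\|^2$ from \eqref{form} with $\psi=u\chi_1+R$ produces, among others, the coupling term $2\bigl(\dot\theta\,u\,\partial_\tau\chi_1,\dot\theta\,\partial_\tau R\bigr)$, which is \emph{quadratic} in $\dot\theta$, hence of order $\alpha^2$, not ``at most linear in $|\alpha|$'' as you claim in your absorption step; and it does not vanish under the fibrewise orthogonality $\int_\omega R\,\chi_1\,dx'=0$, because after integrating $\partial_\tau$ by parts it tests $R$ against $\partial_\tau^2\chi_1$, which is in general not parallel to $\chi_1$. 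Absorbing this term by Cauchy--Schwarz must spend either the diagonal term $\|\dot\theta\,u\,\partial_\tau\chi_1\|^2$ --- which is precisely the source of your effective potential $V_\alpha=c_\tau(\dot\theta^2-\beta^2)$, so nothing positive need survive --- or an $\alpha^2$-sized coercivity in $R$, whereas the transverse gap only supplies the $\alpha$-independent bound $(\lambda_2-\lambda_1)\|R\|^2$. Your own ``main obstacle'' paragraph gestures at this, but acknowledging the obstacle does not resolve it: as written, the first main step of your argument fails quantitatively for large $|\alpha|$, and no threshold $\alpha_0$ can be extracted.

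The second step (a one-dimensional Hardy inequality for $-d^2/dx_1^2+V_\alpha$ with the sign-indefinite $V_\alpha$) is plausible and close in spirit to the paper's Appendix~\ref{App}, where positivity of $H_\alpha$ for large $|\alpha|$ is proved under an extra structural assumption on $\supp\eps$; but it is also only sketched, and in any case it cannot substitute for the missing reduction. The paper's rigorous results (Theorem~\ref{Thm.intro} and its refinements in Section~\ref{Sec.proof}) cover only the repulsive regime $\beta\eps\geq 0$ with \emph{small} $\beta$ and $\eps$, precisely because there the cross terms carry a favourable sign or small prefactors; note also that for $\eps=0$ the operator $H-\lambda_1$ is critical, so any proof must extract positivity genuinely from the perturbation, which your bookkeeping does not yet do. In short, your proposal is a reasonable research plan that mirrors the heuristic reduction of Section~\ref{Sec.heuristic}, but it contains a concrete erroneous claim about the size of the cross terms and leaves unproved exactly the step that keeps Conjecture~\ref{Conj.large} open.
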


We say that the twist is \emph{repulsive}
if~$\beta$ and~$\eps$ are such as supposed in Conjecture~\ref{Conj.sign}.
Note that we impose no sign restrictions
in Conjecture~\ref{Conj.large}.

Unfortunately, we have not been able to prove the conjectures
in the full generality.
In this paper, we establish Conjecture~\ref{Conj.sign}
under the additional assumption that the twist is small
in a suitable sense. Among the variety of Hardy inequalities
proved below, we point out the following result here.

\begin{Theorem}\label{Thm.intro}
Let~$\theta$ be given by~\eqref{angle},
where~$\eps$ is a bounded function. Assume $\beta\eps \geq 0$ and
$\beta\eps\not=0$. Suppose that~$\omega$ is not rotationally
invariant and that its boundary~$\partial\omega$ is of class~$C^4$.
There exist positive constants $\beta^*=\beta^*(\omega)$,
$\eps^*=\eps^*(\beta,\eps,\omega)$ and $c=c(\beta,\eps,\omega)$ such
that if $|\beta| \leq \beta^*$ and $\|\eps\|_\infty \leq \eps^*$
then
\begin{equation}\label{Hardy.intro}
  \forall \psi\in H_0^1(\Omega)
  \,, \qquad
  \int_\Omega |\nabla\psi(x)|^2 \, dx
  - \lambda_1 \int_\Omega |\psi(x)|^2 \, dx
  \geq c \int_\Omega \frac{|\psi(x)|^2}{1+|x|^2} \, dx
  \,.
\end{equation}
\end{Theorem}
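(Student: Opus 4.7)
The plan is to adapt the strategy used in~\cite{EKK} to derive~\eqref{Hardy.tube} to the perturbed periodic setting. First, I would unitarily straighten the tube via the rotation in~\eqref{tube}, so that $-\Delta_D^\Omega$ corresponds to the quadratic form
\begin{equation*}
Q_\theta[\psi] = \int_{\Real\times\omega} \bigl(|\partial_1\psi - \dot\theta(x_1)\partial_\tau\psi|^2 + |\nabla'\psi|^2\bigr) \, dx' \, dx_1
\end{equation*}
on $H_0^1(\Real\times\omega)$. Let $(\lambda_1,J_1)$ denote the first $\sii(\omega)$-normalized eigenpair of the fiber operator $h_\beta := -\Delta_D^\omega - \beta^2\partial_\tau^2$ characterized by~\eqref{lambda}. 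At $\beta=0$ the non-rotational invariance of~$\omega$ ensures $\kappa := \|\partial_\tau J_1\|_{\sii(\omega)}^2 > 0$, and the Dirichlet spectral gap $g := \lambda_2-\lambda_1$ is positive; analytic perturbation theory (where $C^4$ regularity of $\partial\omega$ provides the required smoothness of $J_1$) propagates both properties to $|\beta|\leq\beta^*$ sufficiently small.

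Following~\cite{EKK}, I would then decompose $\psi(x_1,x') = \varphi(x_1) J_1(x') + \eta(x_1,x')$ with $\eta(x_1,\cdot)\perp J_1$ in $\sii(\omega)$ for a.e.~$x_1$. Skew-symmetry of $\partial_\tau$ (with $J_1$ real) kills the would-be longitudinal $\varphi$-diagonal cross-term, and the eigenvalue identity $\|\nabla' J_1\|^2 + \beta^2\kappa = \lambda_1$ simplifies the transverse contribution, so that the $\varphi$-diagonal piece of $Q_\theta[\psi] - \lambda_1\|\psi\|^2$ reduces to
\begin{equation*}
\int_\Real |\varphi'|^2 \, dx_1 + \kappa\int_\Real \bigl(\dot\theta^2 - \beta^2\bigr) |\varphi|^2 \, dx_1.
\end{equation*}
The hypothesis $\beta\eps\geq 0$ enters here decisively: $\dot\theta^2 - \beta^2 = 2\beta\eps + \eps^2 \geq \eps^2$, a nonnegative and not-identically-zero potential, so this is a one-dimensional Schr\"odinger form admitting a lower bound $c_1 \int |\varphi|^2/(1+x_1^2)\,dx_1$ by the same 1D Hardy-type argument that yields~\eqref{Hardy.tube}. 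The $\eta$-diagonal piece, via the gap $g$, is bounded below by a positive multiple of $\|\eta\|^2_{\sii(\Real\times\omega)}$, which trivially dominates $c_2 \int |\eta|^2/(1+x_1^2)$.

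The principal obstacle is the control of the off-diagonal terms coupling $\varphi$ and $\eta$ through $\dot\theta\partial_\tau$ and the transverse gradient of~$J_1$. Using the eigenvalue equation for~$J_1$ and integration by parts in the transverse variables (the $C^4$ boundary regularity guarantees $\partial_\tau J_1\in C^1(\overline\omega)$), these terms split into contributions of order $\beta^2$ and of order $\|\eps\|_\infty$, which by Cauchy--Schwarz and AM--GM can be bounded by a fraction of the already-collected positive $\varphi$- and $\eta$-quantities provided $\beta^*$ and $\eps^*$ are sufficiently small. One then obtains
\begin{equation*}
Q_\theta[\psi] - \lambda_1\|\psi\|^2 \geq \tilde c_1 \int_\Real \frac{|\varphi|^2}{1+x_1^2} \, dx_1 + \tilde c_2 \int_{\Real\times\omega} \frac{|\eta|^2}{1+x_1^2} \, dx'\, dx_1,
\end{equation*}
and since the straightening is an $\sii$-isometry and $1+|x|^2 \asymp 1+x_1^2$ on the bounded-cross-section tube, this is equivalent to~\eqref{Hardy.intro}. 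The delicate bookkeeping will be to ensure the cross-term absorption consumes only a controllable fraction of each positive Hardy contribution uniformly in the admissible range of $\beta$ and $\eps$; this is precisely where the smallness of $\beta^*$ and $\eps^*$ enters, and it is the main point at which the argument is not merely cosmetic variation of~\cite{EKK}.
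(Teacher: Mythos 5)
Your overall route (fiberwise projection $\psi=\varphi J_1+\eta$ with $\eta\perp J_1$, the exact identity for the $\varphi$-diagonal part, the spectral gap for the $\eta$-part, then a 1D Hardy bound) is genuinely different from the paper's, which instead factorizes $\psi=\chi\phi$ through the ground state and runs an auxiliary weighted transverse eigenvalue problem; your diagonal computations are correct as far as they go. The gap is in the step you defer to ``Cauchy--Schwarz and AM--GM'': the cross terms are not absorbable by the quantities you say you have collected. First, if from the $\eta$-diagonal piece you keep only the gap term $g\|\eta\|^2$, you cannot treat the cross terms at all, because they involve $\partial_1\eta$ and $\partial_\tau\eta$, which are not controlled by $\|\eta\|^2$; you must retain fractions of $\|\partial_1\eta-\dot\theta\,\partial_\tau\eta\|^2$ and $\|\nabla'\eta\|^2$ and extract from them a fraction of $\|\partial_1\eta\|^2$ (possible for small $|\beta|+\|\eps\|_\infty$, but it has to be done explicitly). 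Second, and more seriously, the critical cross term is the one coming from $-2(\dot\theta\,\partial_\tau(\varphi J_1),\partial_1\eta)$: its $\beta$-part can be integrated by parts in $x_1$ (constant coefficient) and becomes harmless, but its $\eps$-part, $-2\int\eps\,\varphi\,(\partial_\tau J_1)\,\partial_1\eta$, cannot be integrated by parts (you assumed only $\eps\in L^\infty$), and Young's inequality against the quantities you hold gives at best $\kappa\int\eps^2|\varphi|^2+\|\partial_1\eta\|^2$, i.e.\ it consumes \emph{all} of the $\eps^2$-potential and all of the retained longitudinal energy of $\eta$, leaving no remainder for the Hardy weight. So the claim that these terms ``can be bounded by a fraction of the already-collected positive quantities provided $\beta^*$ and $\eps^*$ are sufficiently small'' is not correct as stated: generic smallness of both parameters does not break this borderline case.

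The fix stays within your framework but uses the sign hypothesis a second time, in a place you did not flag: since $\beta\eps\geq0$, you have the reserve $2\kappa\int\beta\eps\,|\varphi|^2$ in the diagonal potential, and $\eps^2\leq(\|\eps\|_\infty/|\beta|)\,\beta\eps$ pointwise, so choosing the Young parameter of order $\|\eps\|_\infty/|\beta|$ makes the $\varphi$-side cost absorbable into the $\beta\eps$-reserve and the $\eta$-side cost a small fraction of $\|\partial_1\eta\|^2$ --- but this requires $\|\eps\|_\infty$ small \emph{compared with} $|\beta|$, not merely both small (which is admissible here since the theorem lets $\eps^*$ depend on $\beta$, and is exactly what the paper's condition~\eqref{Ass}, $4\|\eps\|_\infty<|\beta|<2/(a\sqrt{23})$, encodes). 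You should also note that the combination of the remaining transverse cross terms produces a factor $\dot\theta^2-\beta^2=2\beta\eps+\eps^2$ (not separate ``order $\beta^2$'' pieces), with a coefficient $\partial_\tau^2J_1$ whose $L^2$-control is where the boundary regularity enters on your route. With these repairs your argument should close and would constitute an alternative to the paper's proof, which avoids the borderline absorption by working with the factorized form, condition~\eqref{Ass}, and the compactness/positivity of the auxiliary transverse operator (this is where the paper needs $C^4$ and the smallness of $|\beta|$) before applying the same final 1D Hardy step.
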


The validity of Conjecture~\ref{Conj.large} is only supported
by heuristic arguments and numerical experiments
presented in the following section.

The organisation of the paper is as follows.
Section~\ref{Sec.heuristic} is devoted to
mostly non-rigorous arguments supporting the validity
of Conjectures~\ref{Conj.sign} and~\ref{Conj.large}.
Various Hardy inequalities related to Conjecture~\ref{Conj.sign},
in particular that of Theorem~\ref{Thm.intro},
are derived in a long Section~\ref{Sec.proof}
divided into many subsections.
In Appendix~\ref{App} we give a proof of positivity
for a one-dimensional Schr\"odinger operator
(\cf~Proposition~\ref{Prop.1D})
which we use as a support for the validity
of Conjecture~\ref{Conj.large} in Section~\ref{Sec.heuristic}.
Finally, in Appendix~\ref{App.Neumann} we explain
why the tool of Neumann bracketing is not suitable
for the proof of Hardy inequalities in the present setting.

%-----------------------------------------%
\section{Heuristic arguments and numerics}\label{Sec.heuristic}
%-----------------------------------------%
%
The validity of Conjectures~\ref{Conj.sign} and~\ref{Conj.large}
is supported by the following arguments.

\subsection{Thin-width asymptotics}
Given a positive number~$\delta$, let us denote by~$\Omega_\delta$
the tube~\eqref{tube} where~$\omega$ is replaced by the scaled
domain $\delta\omega:=\{\delta \:\! x' \,|\, x'\in\omega\}$. The
behaviour of the spectrum of $-\Delta_D^{\Omega_\delta}$ as $\delta
\to 0$ has been investigated in \cite{BMT,deOliveira_2006,KSed}. In
the last reference it is proved that the limit
\begin{equation}\label{effective}
  -\Delta_D^{\Omega_\delta} - \delta^{-2} E_1
  \ \xrightarrow[\delta  \to 0]{} \
  -\Delta^\Real + C_\omega \, \dot\theta^2
\end{equation}
holds in a norm resolvent sense
after a suitable identification of Hilbert spaces.
Here $-\Delta^\Real$ denotes
the one-dimensional Laplacian in $\sii(\Real)$
with $\Dom(-\Delta^\Real):=H^2(\Real)$
and $C_\omega:=\|\partial_\tau\mathcal{J}_1\|_{\sii(\omega)}^2$,
where~$\mathcal{J}_1$ is a normalised eigenfunction of
$-\Delta_D^{\omega}$ corresponding to~$E_1$.
Note that~$C_\omega$ is positive if, and only if,
$\omega$~is not rotationally invariant
with respect to the origin in~$\Real^2$.

Let~$\lambda_1(\delta)$ denote the eigenvalue~\eqref{lambda}
where~$\omega$ is replaced by~$\delta\omega$.
Using the asymptotics
$
  \lambda_1(\delta) = \delta^{-2} E_1 + C_\omega \beta^2
  + \mathcal{O}(\delta^2)
$
as $\delta \to 0$, \eqref{effective} yields
\begin{equation}\label{effective.per}
  -\Delta_D^{\Omega_\delta} - \lambda_1(\delta)
  \ \xrightarrow[\delta  \to 0]{} \
  -\Delta^\Real + C_\omega \, (\dot\theta^2-\beta^2)
\end{equation}
in the norm resolvent sense. It follows that~\eqref{discrete} is
indeed sufficient for the existence of discrete eigenvalues, at
least in the regime of small~$\delta$.
On the other hand, no discrete spectrum is expected if the expression
$
  \dot\theta^2-\beta^2 = \eps^2 + 2\beta\eps
$
is non-negative. This is obviously the case if $\beta\eps$
is non-negative (Conjecture~\ref{Conj.sign}).

At the same time, replacing $\eps \mapsto \alpha\eps$ in~\eqref{angle}
and considering the resulting~$\dot\theta$ in the potential
on the right hand side~\eqref{effective.per},
we end up with the one-dimensional Schr\"odinger operator
\begin{equation}\label{H.1D}
  H_\alpha := -\Delta^\Real + C_\omega \, (\alpha^2\eps^2 + 2\alpha\beta\eps)
\end{equation}
depending on the coupling constant $\alpha \in \Real$.
Under some hypotheses about~$\eps$,
$H_\alpha$~has no negative spectrum if the coupling~$|\alpha|$
is large enough (Conjecture~\ref{Conj.large}).
This is obvious for~$\eps$ of the shape of a characteristic function.
In general, the problem is to take into account the small intervals
around points where (continuous)~$\eps$ vanishes.
In Appendix~\ref{App} we prove the following sufficient condition.
\begin{Proposition}\label{Prop.1D}
Let~$\eps$ be a continuous function whose support is a closure of a
\emph{finite} union of bounded open intervals.
Then there exists a non-negative number~$\alpha_0$
depending on~$\beta$, $C_\omega$ and properties of~$\eps$
such that $H_\alpha \geq 0$ for all $|\alpha| \geq \alpha_0$.
\end{Proposition}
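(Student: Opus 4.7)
The plan is to prove $H_\alpha\geq 0$ via a quadratic-form manipulation: after symmetry reductions, an integration-by-parts identity converts the dangerous linear term of $V_\alpha$ into a sum of a squared quantity and a ``deficit'' potential; a localisation argument then handles the deficit near the zeros of $\eps$.

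By the sign symmetries $(\alpha,\beta)\mapsto(-\alpha,\beta)$ and $\beta\mapsto-\beta$ we may take $\alpha>0$ and $\beta\geq 0$. The case $\beta=0$ is trivial since then $V_\alpha=C_\omega\alpha^2\eps^2\geq 0$, so suppose $\beta>0$. Because $\eps$ has compact support, $V_\alpha$ vanishes outside a bounded set, so $\sigma_{\mathrm{ess}}(H_\alpha)=[0,\infty)$; consequently, $H_\alpha\geq 0$ is equivalent to the absence of discrete eigenvalues below zero. Now let $F(x):=\int_{-\infty}^x\eps$, which is continuous and bounded, and constant on each connected component of $\Real\setminus\supp\eps$. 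For real $u\in H^1(\Real)$ one has $2\int\eps u^2=-4\int Fuu'$ by integration by parts, and completing the square in $u'$ yields the identity
\[
\langle H_\alpha u,u\rangle = \bigl\|u'-2C_\omega\alpha\beta Fu\bigr\|^2 + C_\omega\alpha^2\int\bigl(\eps^2-4C_\omega\beta^2F^2\bigr)u^2\,dx.
\]
The first term is non-negative; the second is non-negative off the compact set $\mathcal{N}:=\{\eps^2<4C_\omega\beta^2F^2\}$.

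The next step is to absorb the contribution on $\mathcal{N}$ into the first term. The set $\mathcal{N}$ is contained in a finite union of small neighbourhoods of the zeros of $\eps$ at which $F$ does not vanish. On each such neighbourhood one can use the one-dimensional Sobolev embedding $\|u\|_\infty^2\leq 2\|u\|\,\|u'\|$, together with a localised (IMS-type) partition of unity, to convert the $L^2$-mass on $\mathcal{N}$ into controlled kinetic energy.

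The main obstacle is that the deficit carries the large prefactor $\alpha^2$ while $\mathcal{N}$ is $\alpha$-independent, so a naive Sobolev absorption leaves an unabsorbable $\alpha^2$-sized remainder. The fix I would pursue is a piecewise integration by parts: partition $\supp\eps$ into finitely many subintervals delimited by the zeros of $\eps$, and on each subinterval use the primitive of $\eps$ that vanishes at the left endpoint. This produces finitely many additional boundary terms of the form $c_k u^2(z_k)$ at the zeros $z_k$ of $\eps$, each of which is controlled by the one-dimensional Hardy inequality applied on the adjacent subinterval. Summing the finitely many contributions and choosing $\alpha\geq \alpha_0$ with $\alpha_0$ sufficiently large depending on $\beta$, $C_\omega$, $\|F\|_\infty$, and structural data of $\eps$ (the number of zeros and the behaviour of $\eps$ near them) closes the estimate.
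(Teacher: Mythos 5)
Your identity $\langle H_\alpha u,u\rangle=\|u'-2C_\omega\alpha\beta Fu\|^2+C_\omega\alpha^2\int(\eps^2-4C_\omega\beta^2F^2)u^2$ is algebraically correct, but the strategy built on it does not close, and the fix you propose cannot repair the obstacle you yourself identify. For the deficit on a subinterval between consecutive zeros of $\eps$ to be non-negative you would need the pointwise bound $|F_k|\le|\eps|/(2\sqrt{C_\omega}\,|\beta|)$ there; since $\eps$ vanishes at both endpoints of the subinterval while $F_k'=\eps$ has a fixed sign inside it, $F_k$ can vanish at only one endpoint (its increment over the subinterval is $\int\eps\neq0$). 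Hence near the other endpoint there is a fixed, $\alpha$-independent neighbourhood on which $\eps^2-4C_\omega\beta^2F_k^2\le-c<0$, and the decomposition produces a negative contribution of order $-\alpha^2$ times the local $L^2$-mass of $u$. The boundary terms created by the piecewise integration by parts are only of order $\alpha\,u(z_k)^2$, and any trace or Hardy estimate controlling them costs kinetic energy, which your identity has already spent entirely inside the completed square (retaining a fraction $(1-\theta)\|u'\|^2$ before completing the square only multiplies the deficit by $1/\theta$ and leaves it $O(\alpha^2)$). So an $O(\alpha)$ mechanism is being asked to absorb an $O(\alpha^2)$ loss, and testing the resulting lower bound with functions concentrated near such an endpoint shows it tends to $-\infty$ as $\alpha\to\infty$; no $\alpha_0$ can be extracted this way. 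A smaller point: the hypothesis bounds the number of components of $\supp\eps$, not the number of zeros of $\eps$ inside them, so ``finitely many subintervals delimited by the zeros'' is not automatic.

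The paper's proof avoids this trap by never transforming the linear term. It uses only the uniform bound $V_\alpha=C_\omega\big((\alpha\eps+\beta)^2-\beta^2\big)\ge-C_\omega\beta^2$, so the negative part of the potential has depth at most $C_\omega\beta^2$ independently of $\alpha$, together with Neumann bracketing over the finitely many intervals of $\supp\eps$ and a strong-resolvent-convergence lemma: $-\Delta_N^{(a,b)}+\alpha\chi_{(a',b')}$ converges as $\alpha\to\infty$ to decoupled Dirichlet--Neumann operators, whose ground-state energies $(\pi/(2\ell))^2$ on the short leftover end intervals exceed $\beta^2$ when those intervals are chosen short enough. In other words, the $\alpha^2$-barrier on the bulk of each interval effectively imposes Dirichlet conditions near the zeros of $\eps$, and the resulting confinement beats the $\beta^2$ dip; finiteness of the number of intervals then yields a uniform $\alpha_0$. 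If you wish to salvage a purely quadratic-form route, you need an ingredient playing this confining role near the zeros of $\eps$; the primitive substitution alone does not provide it.
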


The question is to extend these asymptotic results
to non-infinitesimally small~$\delta$
and to cast the operator positivity
to the existence of Hardy inequalities.

\subsection{Geometric considerations}
The following heuristic argument relies on one's imagination only.
Assume that the function~$\eps$ has a piecewise constant profile:
$\eps(x_1)=\alpha>0$ if $|x_1| < x_1^0$,
with some positive~$x_1^0$, and $\eps(x_1)=0$ elsewhere.

If the origin of $\Real^2$ lies outside~$\omega$,
$\Omega$~converges in a geometric sense as $|\alpha|  \to \infty$
to the disjoint union of two semi-tubes
$
  \Omega_\pm := \Omega \cap \{\pm x_1 > x_1^0\}
$.
By adapting the proof of Section~\ref{Sec.trivial}, it is easy
to see that the spectrum of the Dirichlet Laplacian in any of the
semi-tubes does not start below~$\lambda_1$. Moreover, because of
the presence of the extra Dirichlet conditions at $\{|x_1|=x_1^0\}$
and~\eqref{Hardy.1D},
the shifted operators $-\Delta_D^{\Omega_\pm}-\lambda_1$
will satisfy a Hardy inequality of the type~\eqref{Hardy.per}.

If the origin of~$\Real^2$ lies inside~$\omega$,
$\Omega$~converges as $\alpha \to \infty$
to a set composed of~$\Omega_-$, $\Omega_+$
and a connecting tubular channel of radius $\dist(0,\partial\omega)$.
Again, it should be possible to show that the Dirichlet Laplacian
shifted by~$\lambda_1$ satisfies a Hardy inequality in this domain.

The above arguments give a strong geometric support
for the validity of Conjecture~\ref{Conj.large},
at least in the case of the special profile of~$\eps$.

\subsection{Numerical simulations}
Finally, we have performed several numerical experiments
to support the validity of
Conjectures~\ref{Conj.sign} and~\ref{Conj.large}.
For illustration, let us take a square cross-section
$\omega = (-\frac{1}{2},\frac{1}{2})^2$ and $\beta=1$.
In front of~$\eps$ in~\eqref{angle},
we add the coupling constant $\alpha \in \Real$
and consider two kinds of profile of~$\eps$:
\begin{equation}\label{kinds}
  \eps_1(x_1) :=
  \begin{cases}
    1 & \mbox{if} \quad |x_1| \leq 1 \,,
    \\
    0 & \mbox{elsewhere} \,,
  \end{cases}
  \qquad
  \eps_2(x_1) :=
  \begin{cases}
    1-|x_1| & \mbox{if} \quad |x_1| \leq 1 \,,
    \\
    0 & \mbox{elsewhere} \,.
  \end{cases}
\end{equation}
Our numerical calculations have been done in a finite tube
$
  \Omega_L := \Omega \cap \{|x_1| < L\}
$
with~$L$ so large that the computed eigenvalues stop
to be sensitive to further enlargements of~$L$.

In Figure~\ref{fig} we present the dependence of
the lowest eigenvalue of~$-\Delta_D^{\Omega_L}$ on~$\alpha$
as the blue curve.
The horizontal red line corresponds to the energy~$\lambda_1$,
which is the threshold of the essential spectrum of~$-\Delta_D^{\Omega}$,
\cf~\eqref{essential}.
Hence, the blue curve below the red horizontal line
approximates the lowest discrete eigenvalue of~$-\Delta_D^{\Omega}$,
while there is just the essential spectrum of~$-\Delta_D^{\Omega}$
above the red line.
Consequently, the validity of a Hardy-type inequality~\eqref{Hardy.per}
is expected whenever the blue curve is strictly above the red line.

We see that the numerical pictures clearly confirm
our Conjectures~\ref{Conj.sign} and~\ref{Conj.large}.
Indeed, $\inf\sigma(-\Delta_D^{\Omega_L})>\lambda_1$
whenever $\alpha>0$ (Conjecture~\ref{Conj.sign})
or $\alpha < \alpha^* < 0$ (Conjecture~\ref{Conj.large}).
We approximately have $\alpha_1^*\approx-2$ and $\alpha_2^*\approx-3$
for the profile~$\eps_1$ and~$\eps_2$, respectively.
It is remarkable that the critical values of~$\alpha^*$
are so close to the smallest values of~$\alpha$
for which the sufficient condition~\eqref{discrete} applies.
Indeed, \eqref{discrete}~yields the existence of
a discrete spectrum of~$-\Delta_D^\Omega$
whenever $\alpha \in (-2,0)$ and $\alpha \in (-3,0)$
for the profile~$\eps_1$ and~$\eps_2$, respectively.

\begin{figure}[h!]
\begin{center}
\includegraphics[width=1.0\textwidth]{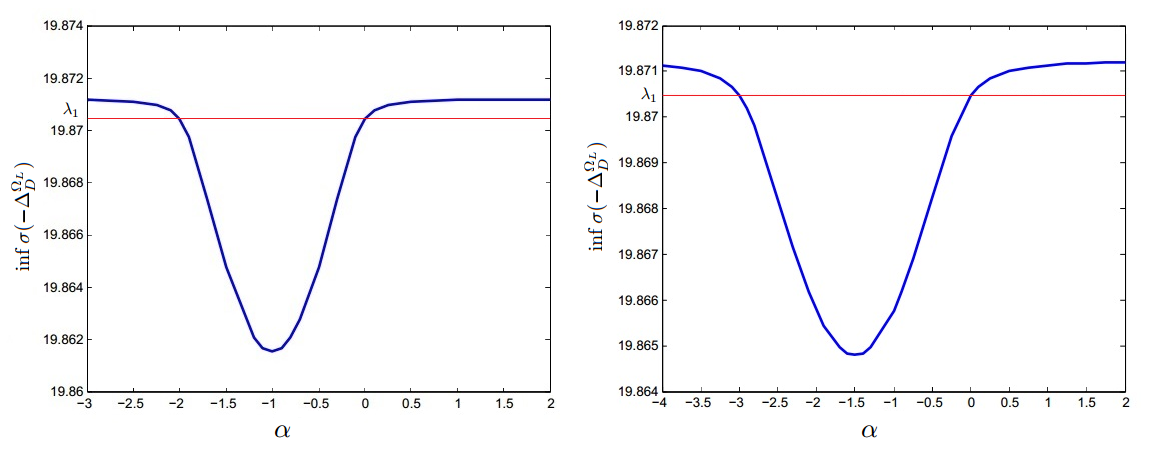}
%\fbox{PICTURE}
\end{center}
\caption{Dependence of
$\inf\sigma(-\Delta_D^{\Omega_L})$ on~$\alpha$
for the two profiles~\eqref{kinds} and $L=100$.
The horizontal (red) line corresponds to the energy
$\lambda_1=\inf\sigma_\mathrm{ess}(-\Delta_D^\Omega)$.}\label{fig}
\end{figure}
%

%-------------------------------------------------%
\section{Hardy inequalities for a repulsive twist}\label{Sec.proof}
%-------------------------------------------------%
%
This long section divided into many subsections
is primarily intended to establish
a well-arranged proof of Theorem~\ref{Thm.intro},
which deals with $\beta\eps \geq 0$.
However, some of the intermediate results
might be interesting on its own
and without this sign (and other) restriction(s).

\subsection{Curvilinear coordinates}
The very definition~\eqref{tube} gives rise to
a diffeomorphism between~$\Omega$
and the straight tube~$\Real\times\omega$.
Passing to the curvilinear coordinates
$(x_1,x_2,x_3) \in \Real \times \omega$,
the Dirichlet Laplacian $-\Delta_D^\Omega$ in $\sii(\Omega)$
can be identified
(\cf~\cite{K6-with-erratum} for more details)
with the operator~$H$ in $\sii(\Real\times\omega)$
associated with the quadratic form
\begin{equation}\label{form}
  h[\psi] := \|\partial_1\psi-\dot\theta\partial_\tau\psi\|^2
  + \|\nabla'\psi\|^2
  \,, \qquad
  \Dom(h) := H_0^1(\Real\times\omega)
  \,.
\end{equation}
Here and in the sequel $\|\cdot\|$
denotes the norm of $\sii(\Real\times\omega)$.
The associated inner product will be denoted by $(\cdot,\cdot)$.

Since~$\dot\theta$ is bounded,
the space $C_0^\infty(\Real\times\omega)$ is a core of~$h$.
Henceforth we thus take an arbitrary
$$
  \psi \in C_0^\infty(\Real\times\omega)
  \,.
$$
Moreover, since~$H$ commutes with complex conjugation,
we may suppose that~$\psi$ is real-valued.

\subsection{Ground-state decomposition}\label{Sec.GSD}
Let~$\chi$ denote an eigenfunction of
$-\Delta_D^\omega - \beta^2 \partial_\tau^2$
corresponding to~$\lambda_1$.
We choose~$\chi$ positive and normalised to~$1$ in $\sii(\omega)$.
Since we are interested in properties of~$H$ near the
threshold~$\lambda_1$ of the essential spectrum,
it is useful to make the decomposition
\begin{equation}\label{decomposition}
  \psi(x) = \chi(x') \phi(x)
  \,,
\end{equation}
where~$\phi$ is a $C_0^\infty(\Real\times\omega)$
function actually defined by~\eqref{decomposition}.

It is straightforward to check that
\begin{equation}\label{form.shifted}
\begin{aligned}
  Q[\psi] :=& \ h[\psi] - \lambda_1 \|\psi\|^2
  \\
  =& \
  \|\partial_1\psi-\eps\partial_\tau\psi-\beta\chi\partial_\tau\phi\|^2
  + \|\chi\nabla'\phi\|^2
  - 2 (\partial_1\psi-\eps\partial_\tau\psi,\beta\phi\partial_\tau\chi)
  \\
  =& \
  \|\partial_1\psi\|^2 + \|\eps\partial_\tau\psi\|^2
  + \|\chi\nabla'\phi\|^2 + \|\beta\chi\partial_\tau\phi\|^2
  \\
  & \
  + 2 (\partial_\tau\psi,\beta\eps\partial_\tau\psi)
  - 2 (\partial_1\psi,\eps\partial_\tau\psi)
  - 2 (\partial_1\psi,\beta\partial_\tau\psi)
  \,.
\end{aligned}
\end{equation}
Here and in the sequel, we use the same symbol~$\eps$
(respectively~$\chi$) for the function $\eps\otimes 1$ (respectively
$1\otimes\chi$) on $\Real\times\omega$, and similarly for other
functions that will appear below.

\subsection{Positivity for a trivial twist}\label{Sec.trivial}
It is not clear from~\eqref{form.shifted}
whether $Q[\psi] \geq 0$ if $\beta\eps \geq 0$.
In fact, the non-negativity is not completely obvious
even for the trivial situation $\eps=0$
(periodically twisted tube),
but it can be established as follows.

If $\eps=0$, then~\eqref{form.shifted} reduces to
\begin{equation}\label{form.zero}
  Q[\psi] =
  \|\partial_1\psi\|^2
  + \|\chi\nabla'\phi\|^2 + \|\beta\chi\partial_\tau\phi\|^2
  - 2 (\partial_1\psi,\beta\partial_\tau\psi)
  \,.
\end{equation}
We write the mixed term in~\eqref{form.zero} as follows
\begin{equation}\label{zero1}
  - 2 (\partial_1\psi,\beta\partial_\tau\psi)
  = - 2 (\partial_1\psi,\beta\chi\partial_\tau\phi)
  - 2 (\partial_1\psi,\beta\phi\partial_\tau\chi)
  \,.
\end{equation}
By an integration by parts in the first variable, the second term on
the right hand side is equal to zero:
\begin{equation}\label{zero2}
  - 2 (\partial_1\psi,\beta\phi\partial_\tau\chi)
  =
  - \frac{1}{2} \, \beta \int_{\Real\times\omega}
  (\partial_1\phi^2) (\partial_\tau\chi^2)
  = 0
  \,.
\end{equation}
Consequently,
\begin{equation}\label{xx}
  Q[\psi] =
  \|\partial_1\psi-\beta\chi\partial_\tau\phi\|^2
  + \|\chi\nabla'\phi\|^2
  \geq 0
  \,.
\end{equation}
That is, $H-\lambda_1 \geq 0$.

Except for $\psi=0$ (\ie~$\phi=0$),
inequality~\eqref{xx} is always strict.
On the other hand, \eqref{xx}~becomes sharp asymptotically
as $N\to\infty$ when considering the sequence of functions
$\psi_N(x)=\phi_N(x_1)\chi(x')$,
where $\phi_N(x_1):=1$ if $|x_1| \leq N$,
$\phi_N(x_1):=(2N-|x_1|)/N$ if $N < |x_1| < 2N$,
and $\phi_N=0$ otherwise.
Consequently, $H-\lambda_1$ is critical in the sense that
adding to $H-\lambda_1$
an arbitrarily small non-positive smooth potential
which is not identically equal to zero
leads to the appearance of negative spectrum.
This shows that we cannot have a Hardy inequality~\eqref{Hardy.per}
for $\eps=0$.

\subsection{Positivity for a small repulsive twist}
Using~\eqref{zero2}, we rewrite the second line
of~\eqref{form.shifted} as follows
\begin{equation}\label{but.one}
   Q[\psi] =
   \|\partial_1\psi-\eps\partial_\tau\psi-\beta\chi\partial_\tau\phi\|^2
    + \|\chi\nabla'\phi\|^2
    + 2 (\eps\phi\partial_\tau\chi,\beta\phi\partial_\tau\chi)
    + 2 (\eps\chi\partial_\tau\phi,\beta\phi\partial_\tau\chi)
  \,.
\end{equation}
Note that the last but one integral on the right hand side is
non-negative whenever $\beta\eps \geq 0$,
in particular for any repulsive twist.
We assume this sign restriction henceforth.
The last integral (of indefinite sign)
can be estimated by means of the Schwarz and Young inequalities
\begin{equation}\label{Young}
\begin{aligned}
  2 (\eps\chi\partial_\tau\phi,\beta\phi\partial_\tau\chi)
  &\geq -2 \big\|\sqrt{\beta\eps}\,\phi\partial_\tau\chi\big\|
  \big\|\sqrt{\beta\eps}\,\chi\partial_\tau\phi\big\|
  \\
  &\geq - \delta \big\|\sqrt{\beta\eps}\,\phi\partial_\tau\chi\big\|^2
  - \frac{1}{\delta} \big\|\sqrt{\beta\eps}\,\chi\partial_\tau\phi\big\|^2
  \,,
\end{aligned}
\end{equation}
with any positive~$\delta$. Here the first term on the right hand
side can be controlled by the last but one integral on the right
hand side of~\eqref{but.one}. The second term on the right hand side
of~\eqref{Young} can be estimated using the pointwise estimate
\begin{equation}\label{pointwise}
  |\partial_\tau\phi| \leq a \, |\nabla'\phi|
  \,, \qquad
  a := \sup_{x'\in\omega} |x'|
  \,,
\end{equation}
and controlled by the second term on the right hand side
of~\eqref{but.one} provided that~$\beta\eps$ is small.
More specifically, we thus have
\begin{equation}\label{crucial}
  Q[\psi] \geq
  \|\partial_1\psi-\eps\partial_\tau\psi-\beta\chi\partial_\tau\phi\|^2
  + (2-\delta) \big\|\sqrt{\beta\eps}\,\phi\partial_\tau\chi\big\|^2
  + \left[1 - \frac{\|\beta\eps\|_\infty \, a^2}{\delta} \right]
  \|\chi\nabla'\phi\|^2
  \,.
\end{equation}
Consequently, choosing~$\delta=2$,
we conclude with the desired positivity:
\begin{Proposition}
$H-\lambda_1 \geq 0$ provided that $\beta\eps \geq 0$ and
\begin{equation}\label{small.hypotheses}
  \|\beta\eps\|_\infty \, a^2 \leq 2
  \,.
\end{equation}
\end{Proposition}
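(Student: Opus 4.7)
The plan is to show $Q[\psi] := h[\psi] - \lambda_1\|\psi\|^2 \geq 0$ on the core $C_0^\infty(\Real\times\omega)$, starting from the identity \eqref{but.one} already derived above. Under the sign hypothesis $\beta\eps \geq 0$, three of the four terms on the right-hand side of \eqref{but.one} are manifestly non-negative: the squared norm, the transverse Dirichlet form $\|\chi\nabla'\phi\|^2$, and the integral $2(\eps\phi\partial_\tau\chi,\beta\phi\partial_\tau\chi)$ which equals $2\int \beta\eps\,\phi^2(\partial_\tau\chi)^2$. The only genuine obstacle is the remaining cross term $2(\eps\chi\partial_\tau\phi,\beta\phi\partial_\tau\chi)$, whose sign is indefinite.

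Next, I would control this indefinite term by the weighted Schwarz/Young inequality \eqref{Young}, distributing the factor $\beta\eps$ symmetrically via square-root weights $\sqrt{\beta\eps}$ (well-defined precisely because of the sign hypothesis), with a free parameter $\delta>0$. This splits the error into a term $\delta\|\sqrt{\beta\eps}\,\phi\partial_\tau\chi\|^2$, to be absorbed into the $2\|\sqrt{\beta\eps}\,\phi\partial_\tau\chi\|^2$ piece already present in \eqref{but.one} (which demands $\delta \leq 2$), and a term $\delta^{-1}\|\sqrt{\beta\eps}\,\chi\partial_\tau\phi\|^2$. The latter I would handle by applying the pointwise bound $|\partial_\tau\phi|\leq a|\nabla'\phi|$ of \eqref{pointwise} together with the sup-norm estimate on $\beta\eps$, which recasts this error as a multiple of $\|\chi\nabla'\phi\|^2$ that can be swallowed by the Dirichlet form provided $\delta^{-1}\|\beta\eps\|_\infty a^2 \leq 1$.

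Finally I would balance the two resulting constraints, $\delta \leq 2$ and $\delta \geq \|\beta\eps\|_\infty a^2$, by the optimal choice $\delta=2$, at which point both conditions collapse into the single smallness hypothesis $\|\beta\eps\|_\infty a^2 \leq 2$ of \eqref{small.hypotheses}. This yields the inequality \eqref{crucial} with all three coefficients non-negative, hence $Q[\psi] \geq 0$ on the core, and therefore $H-\lambda_1\geq 0$ by density.

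The conceptual obstacle is that the indefinite cross term cannot be dominated by $\|\chi\nabla'\phi\|^2$ alone: both the positivity of the compensating integral $2(\eps\phi\partial_\tau\chi,\beta\phi\partial_\tau\chi)$ (which requires the sign restriction) and the smallness of $\|\beta\eps\|_\infty a^2$ are essential to execute the two-step absorption. This explains why the statement necessarily combines a sign hypothesis with a quantitative smallness condition, and also why the approach will fall short of establishing the full Conjecture~\ref{Conj.sign} without further ideas to eliminate the latter.
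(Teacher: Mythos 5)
Your proposal is correct and follows essentially the same route as the paper: it starts from \eqref{but.one}, uses the sign hypothesis to keep $2(\eps\phi\partial_\tau\chi,\beta\phi\partial_\tau\chi)$ as a positive reservoir, splits the indefinite cross term by the weighted Cauchy--Schwarz/Young inequality \eqref{Young} with parameter $\delta$, absorbs one piece into that reservoir and the other into $\|\chi\nabla'\phi\|^2$ via the pointwise bound \eqref{pointwise}, and takes $\delta=2$ to arrive at \eqref{crucial} under the hypothesis \eqref{small.hypotheses}. No further comment is needed.
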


\subsection{Local Hardy inequality}
In addition to $\beta\eps \geq 0$,
let us now assume that~$\beta\eps$ is non-trivial,
so that we are in the situation of repulsive global twist
(Hardy inequalities for the local twist, \ie\ $\beta=0$,
such as~\eqref{Hardy.tube} are known, see the introduction).
We also strengthen~\eqref{small.hypotheses} to
\begin{equation}\label{small.hypotheses.strict}
  \|\beta\eps\|_\infty \, a^2 < 2
  \,.
\end{equation}
Choosing in~\eqref{crucial} $\delta =  \|\beta\eps\|_\infty \, a^2$,
neglecting the first term on the right hand side and recalling the
decomposition~\eqref{decomposition}, we get
\begin{equation}\label{pre-Hardy}
  Q[\psi] \geq
  (2-\|\beta\eps\|_\infty \, a^2)
  \left\|\sqrt{\beta\eps}\,\frac{\partial_\tau\chi}{\chi} \psi\right\|^2
  \,.
\end{equation}
This inequality has been established
for any $\psi \in C_0^\infty(\Real\times\omega)$,
however, by density it extends to all $\psi
\in H_0^1(\Real\times\omega)$.
It thus represents a Hardy-type inequality~\eqref{Hardy.per}
that we state in the following theorem.
\begin{Theorem}\label{Thm.local}
Let $\beta\eps \geq 0$ and $\beta\eps\not=0$.
Then
\begin{equation}\label{HI.local}
  H-\lambda_1 \geq
  (2-\|\beta\eps\|_\infty \, a^2)
  \, \beta \eps
  \left(\frac{\partial_\tau\chi}{\chi}\right)^2
  \,.
\end{equation}
\end{Theorem}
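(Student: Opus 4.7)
The strategy is already almost in place from the derivation of \eqref{crucial}: it remains only to make a concrete choice of the free parameter~$\delta$ and to re-express the resulting lower bound in terms of~$\psi$ rather than~$\phi$. I would therefore start from the identity \eqref{but.one} for $Q[\psi]$ obtained via the ground-state decomposition $\psi = \chi\phi$ and the integration-by-parts identity \eqref{zero2}. Under the sign assumption $\beta\eps \geq 0$ the term $2(\eps\phi\partial_\tau\chi, \beta\phi\partial_\tau\chi)$ is manifestly non-negative, and the only contribution of indefinite sign is the cross term $2(\eps\chi\partial_\tau\phi, \beta\phi\partial_\tau\chi)$.

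Next I would dominate this cross term using Cauchy--Schwarz followed by Young's inequality with parameter $\delta>0$, as in \eqref{Young}, and then bound $|\partial_\tau\phi|$ by $a\,|\nabla'\phi|$ via the pointwise estimate \eqref{pointwise}. The first of the two terms generated by Young is absorbed by the manifestly non-negative $2(\eps\phi\partial_\tau\chi,\beta\phi\partial_\tau\chi)$ of \eqref{but.one}, while the second is absorbed by $\|\chi\nabla'\phi\|^2$. This yields \eqref{crucial}, with a $\delta$-dependent coefficient $2-\delta$ in front of $\|\sqrt{\beta\eps}\,\phi\partial_\tau\chi\|^2$ and $1-\|\beta\eps\|_\infty a^2/\delta$ in front of $\|\chi\nabla'\phi\|^2$.

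Under the strict smallness hypothesis \eqref{small.hypotheses.strict}, the choice $\delta = \|\beta\eps\|_\infty a^2 \in (0,2)$ kills the coefficient of $\|\chi\nabla'\phi\|^2$ exactly while leaving a strictly positive prefactor $2-\|\beta\eps\|_\infty a^2$ in front of $\|\sqrt{\beta\eps}\,\phi\partial_\tau\chi\|^2$. Dropping the squared $L^2$-norm and the remaining non-negative summand and using $\phi\partial_\tau\chi = (\partial_\tau\chi/\chi)\,\psi$ then produces the pointwise-in-the-fibre Hardy estimate \eqref{pre-Hardy} for every $\psi \in C_0^\infty(\Real\times\omega)$. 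Finally, since $C_0^\infty(\Real\times\omega)$ is a form core of~$h$ and both sides extend to lower semicontinuous quadratic forms on the form domain, a standard density argument upgrades the inequality to all $\psi\in H_0^1(\Real\times\omega)$, which is precisely the operator inequality \eqref{HI.local}.

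The main technical point to verify, rather than a genuine obstacle, is that the Hardy weight $(\partial_\tau\chi/\chi)^2$ defines a meaningful quadratic-form bound on $H_0^1(\Real\times\omega)$: the ground state~$\chi$ is smooth and strictly positive inside~$\omega$ so the ratio is locally bounded, and although it may blow up near $\partial\omega$, the identification $(\partial_\tau\chi/\chi)\psi = \phi\,\partial_\tau\chi$ shows that the right-hand side of \eqref{HI.local} is nothing but $(2-\|\beta\eps\|_\infty a^2)\|\sqrt{\beta\eps}\,\phi\,\partial_\tau\chi\|^2$, which is finite whenever $\phi = \psi/\chi \in \sii_{\mathrm{loc}}(\Real\times\omega)$ and is in particular finite on the dense core $C_0^\infty(\Real\times\omega)$; this is what makes the density extension unproblematic.
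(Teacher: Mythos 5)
Your argument is correct and coincides with the paper's own derivation: starting from \eqref{but.one}, estimating the indefinite cross term via Cauchy--Schwarz and Young as in \eqref{Young}, invoking the pointwise bound \eqref{pointwise} to reach \eqref{crucial}, choosing $\delta=\|\beta\eps\|_\infty a^2$, dropping the non-negative remainder, rewriting $\phi\,\partial_\tau\chi=(\partial_\tau\chi/\chi)\psi$, and extending from the core $C_0^\infty(\Real\times\omega)$ by density. Your closing remark on the finiteness of the weighted form on the core is a harmless elaboration of what the paper leaves implicit.
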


It is a \emph{local} Hardy inequality
(\cf~\cite{K6-with-erratum} for the terminology)
if~\eqref{small.hypotheses.strict} holds,
since~$\eps$ can be compactly supported in~$\Real$.
Moreover, $\partial_\tau\chi$ can vanish in~$\overline{\omega}$.
However, it is important to notice that~$\partial_\tau\chi$ cannot
vanish on a subset of~$\omega$ with positive measure,
unless~$\omega$ is rotationally invariant. Indeed, by
differentiating the equation for~$\chi$ and noticing
that~$\partial_\tau$ commutes with~$\Delta'$, the function
$\eta:=\partial_\tau\chi$ satisfies the same equation
$
  (-\Delta' - \beta^2 \partial_\tau^2) \eta
  = \lambda_1 \eta
$
in~$\omega$ for which the unique continuation property holds.
Finally, let us notice that the function
on the right hand side of~\eqref{HI.local}
can diverge on~$\partial\Omega$
due to the presence of~$\chi$ in the denominator,
which makes a resemblance to the classical
Hardy inequality~\eqref{Hardy.1D}.

\subsection{A very brute estimate}
We continue assuming $\beta\eps \geq 0$ and $\beta\eps\not=0$.
Our objective is to
cast~\eqref{HI.local} into a \emph{global} Hardy inequality,
\ie~with a right hand side being a positive function
in $\Real\times\omega$.
This can be done by employing the presence of~$\|\partial_1\psi\|$
and~$\|\chi\nabla'\phi\|$ in~\eqref{crucial}.

We thus come back to the second equality in~\eqref{form.shifted},
employ~\eqref{zero2} and further develop the expression as follows
\begin{equation}\label{form.shifted.bis}
\begin{aligned}
  Q[\psi] \ =& \
  \ \|\partial_1\psi\|^2
  + \|\eps\chi\partial_\tau\phi\|^2
  + \|\eps\partial_\tau\chi \phi\|^2
  + \|\chi\nabla'\phi\|^2 + \|\beta\chi\partial_\tau\phi\|^2
  \\
  & \
  + 2 \big\|\sqrt{\beta\eps} \chi \partial_\tau\phi\big\|^2
  +  2  \big\|\sqrt{\beta\eps}\partial_\tau\chi \phi\big\|^2
  +  4  \big(\sqrt{\beta\eps}\chi \partial_\tau\phi,
  \sqrt{\beta\eps}\partial_\tau\chi \phi\big)
  \\
  & \
  + 2 (\eps\chi\partial_\tau\phi,\eps\partial_\tau\chi \phi)
  - 2 (\partial_1\psi, \eps\partial_\tau\chi \phi)
  - 2 \big(\partial_1\psi,(\eps+\beta)\chi\partial_\tau\phi\big)
  \,.
\end{aligned}
\end{equation}
Using the brute estimates
\begin{align*}
  | 2 (\partial_1\psi,\eps\partial_\tau\chi \phi) |
  & \leq \delta_1 \|\partial_1\psi\|^2
  + \delta_1^{-1} \|\eps\partial_\tau\chi \phi\|^2
  \,,
  \\
  | 2 \big(\partial_1\psi,(\eps+\beta)\chi\partial_\tau\phi\big) |
  & \leq \delta_2 \|\partial_1\psi\|^2
  + \delta_2^{-1} \|(\eps+\beta)\chi\partial_\tau\phi\|^2
  \,,
  \\
  \big| 2 \big(\sqrt{\beta\eps}\chi \partial_\tau\phi,
  \sqrt{\beta\eps}\partial_\tau\chi \phi\big) \big|
  & \leq \delta_3 \big\|\sqrt{\beta\eps}\partial_\tau\chi \phi\big\|^2
  + \delta_3^{-1} \big\|\sqrt{\beta\eps}\chi \partial_\tau\phi\big\|^2
  \,,
  \\
  | 2 (\eps\chi\partial_\tau\phi,\eps\partial_\tau\chi \phi) |
  & \leq \delta_4 \|\eps\partial_\tau\chi \phi\|
  + \delta_4^{-1} \big\|\eps\chi\partial_\tau\phi\|^2
  \,,
\end{align*}
with arbitrary positive numbers $\delta_1,\delta_2,\delta_3,\delta_4$,
we obtain
\begin{align*}
Q[\psi] \geq \ &
(1-\delta_{1}-\delta_{2})\|\partial_{1}\psi\|^{2}
+\|\chi\nabla'\phi\|^{2}
\\
&+ \int_{\Real\times\omega}
\left[
(1-\delta_{1}^{-1}-\delta_{4})\eps^{2}
+2(1-\delta_{3})\beta\eps
\right]
|\phi\partial_{\tau}\chi|^{2} \, dx
\\
&+ \int_{\Real\times\omega}
\left[
(1-\delta_{2}^{-1})\beta^{2}
+(1-\delta_{2}^{-1}-\delta_{4}^{-1})\eps^{2}
+2(1-\delta_{2}^{-1}-\delta_{3}^{-1})\beta\eps
\right]
|\chi\partial_{\tau}\phi|^{2} \, dx.
\end{align*}
Choosing $\delta_1=\delta_2=1/4$, $\delta_3=1/2$ and $\delta_4=1$,
the previous inequality reads
\begin{align*}
  Q[\psi] \geq \ & \frac{1}{2}\|\partial_{1}\psi\|^{2}
  + \|\chi\nabla'\phi\|^2
  + \int_{\Real\times\omega}(\beta\eps- 4\eps^{2})
  |\phi\partial_{\tau}\chi|^{2} \, dx
  \\
  & - \int_{\Real\times\omega}
  (3\beta^{2}+4\eps^{2}+10\beta\eps)
  |\chi\partial_\tau\phi|^2 \, dx
  \,.
\end{align*}
Finally, employing the pointwise bound~\eqref{pointwise},
we conclude with
\begin{equation}\label{Q12}
  Q[\psi] \geq
  \ \frac{1}{2} \|\partial_1\psi\|^2
  + c_1 \|\chi\nabla'\phi\|^2
  + c_2 \big\|\sqrt{\beta\eps}\;\!\partial_\tau\chi \phi\big\|^2
  \,,
\end{equation}
where
\begin{equation}\label{constants}
  c_{1}=1-a^{2}\,
  (3\beta^{2}+4\|\eps\|_{\infty}^{2}+10\|\beta\eps\|_{\infty})
  \,,
  \qquad
  c_{2}=1-4\frac{\|\eps\|_{\infty}}{|\beta|}
  \,,
\end{equation}
are positive constants provided that 
\begin{equation}\label{Ass}
  4 \, \|\eps\|_\infty < |\beta| < 
  \frac{2}{a \sqrt{23}} 
  \,.
\end{equation}
This is a condition on the smallness of both
the global periodic twist~$\beta$
and its local perturbation~$\eps$.

\subsection{An auxiliary transverse problem}
For any number $\epsilon \in \Real$, define
\begin{equation}\label{mu}
  \mu_\epsilon := \inf_{\phi \in C_0^\infty(\omega)\setminus\{0\}}
  \frac{\|\chi\nabla'\phi\|_{\sii(\omega)}^2
  + \epsilon^2 \|\partial_\tau\chi \phi\|_{\sii(\omega)}^2}
  {\|\chi \phi\|_{\sii(\omega)}^2}
  \,.
\end{equation}
Consider the quadratic form
$$
  q_\epsilon[\phi] :=
  \|\chi\nabla\phi\|_{\sii(\omega)}^2
  + \epsilon^2 \|\partial_\tau\chi \phi\|_{\sii(\omega)}^2
  \,, \qquad
  \Dom(q_\epsilon) := C_0^\infty(\omega)
  \,,
$$
in the Hilbert space $\sii(\omega,\chi(x')^2 dx')$ and denote by
$\tilde{q}_\epsilon$ its closure. Then $\mu_\epsilon$ is the lowest
point in the spectrum of the self-adjoint operator~$h_\epsilon$ in
$\sii(\omega,\chi(x')^2 dx')$ associated with~$\tilde{q}_\epsilon$.
Our objective is to show that $\mu_\epsilon$ is positive unless
$\epsilon=0$ or~$\omega$ is rotationally symmetric with respect to
the origin.

\begin{Lemma}\label{Lem.compact}
Let~$\partial\omega$ be of class~$C^4$. There exists a
positive constant~$\epsilon_0$, depending on the geometry of~$\omega$, such
that for all $|\epsilon| < \epsilon_0$, $h_\epsilon$ is an operator
with compact resolvent.
\end{Lemma}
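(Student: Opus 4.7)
The natural approach is via the ground-state representation. The map $\sii(\omega,\chi^{2}\,dx')\to \sii(\omega)$ given by $\phi\mapsto\psi:=\chi\phi$ is unitary and induces a bijection of $C_0^\infty(\omega)$ onto itself. Under it, $h_\epsilon$ becomes a Schr\"odinger-type operator $\tilde h_\epsilon$ on the unweighted space $\sii(\omega)$, and the target is to show that the transformed quadratic form $\tilde q_\epsilon[\psi]:=q_\epsilon[\psi/\chi]$ extends to a closed, bounded-below form with form domain $H_0^{1}(\omega)$; compactness of the resolvent then follows from Rellich's compact embedding $H_0^{1}(\omega)\hookrightarrow \sii(\omega)$.

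A direct integration by parts, using the eigenvalue equation $-\Delta'\chi-\beta^{2}\partial_\tau^{2}\chi=\lambda_1\chi$ and the vanishing of $\chi$ on $\partial\omega$, shows that for $\phi\in C_0^\infty(\omega)$ and $\psi=\chi\phi$,
\begin{equation*}
  q_\epsilon[\phi]
  \,=\,\|\nabla'\psi\|_{\sii(\omega)}^{2}
  -\lambda_1\|\psi\|_{\sii(\omega)}^{2}
  +\int_\omega V_\epsilon\,\psi^{2}\,dx',
  \qquad
  V_\epsilon := -\beta^{2}\,\frac{\partial_\tau^{2}\chi}{\chi}
  +\epsilon^{2}\left(\frac{\partial_\tau\chi}{\chi}\right)^{\!2}.
\end{equation*}
Thus $\tilde q_\epsilon$ is the quadratic form of the formal Schr\"odinger operator $-\Delta_D^\omega-\lambda_1+V_\epsilon$ on $\sii(\omega)$, and the remaining task is to make sense of it as a closed form on $H_0^{1}(\omega)$.

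The main technical step is to control the singular potential $V_\epsilon$ relative to $-\Delta_D^\omega$. The $C^{4}$-regularity of $\partial\omega$ gives via elliptic regularity that $\chi\in C^{3}(\overline{\omega})$, so both $\partial_\tau\chi$ and $\partial_\tau^{2}\chi$ are bounded on $\overline{\omega}$; Hopf's lemma furnishes a positive lower bound $\chi\geq c\,\dist(\cdot,\partial\omega)$, whence
\begin{equation*}
  \left|\frac{\partial_\tau^{2}\chi}{\chi}\right|
  \,\leq\,\frac{C_{1}}{\dist(\cdot,\partial\omega)},
  \qquad
  \left(\frac{\partial_\tau\chi}{\chi}\right)^{\!2}
  \,\leq\,\frac{C_{2}}{\dist(\cdot,\partial\omega)^{2}}.
\end{equation*}
Combined with the classical Hardy inequality $\int_\omega\psi^{2}/\dist(\cdot,\partial\omega)^{2}\,dx'\leq K\,\|\nabla'\psi\|_{\sii(\omega)}^{2}$ valid on $H_0^{1}(\omega)$, the first summand of $V_\epsilon$ becomes infinitesimally form-bounded with respect to $-\Delta_D^\omega$ (via the Young-type splitting $1/\dist\leq\eta/\dist^{2}+1/(4\eta)$), while the second has relative form-bound at most $C_{2}K\epsilon^{2}$. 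Setting $\epsilon_0:=1/\sqrt{C_{2}K}$, this latter bound is strictly less than one whenever $|\epsilon|<\epsilon_0$, and the KLMN theorem then yields that $\tilde q_\epsilon$ is closed and bounded below on $H_0^{1}(\omega)$ with form norm equivalent to the standard $H^{1}$-norm. The associated self-adjoint operator $\tilde h_\epsilon$ therefore has form domain $H_0^{1}(\omega)$, which embeds compactly into $\sii(\omega)$, so both $\tilde h_\epsilon$ and $h_\epsilon$ have compact resolvent.

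The principal obstacle is the $1/\dist^{2}$-singularity of $(\partial_\tau\chi/\chi)^{2}$ at $\partial\omega$: this potential is form-bounded by Hardy's inequality but only with a finite, \emph{non-infinitesimal} constant, which is precisely what forces the smallness restriction $|\epsilon|<\epsilon_0$. A minor auxiliary point is to verify that the closure of $C_0^\infty(\omega)$ in the form norm of $\tilde q_\epsilon$ agrees with $H_0^{1}(\omega)$; this follows automatically once the above relative form bounds establish the equivalence of norms.
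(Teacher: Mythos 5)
Your proof is correct and follows essentially the same route as the paper: the unitary ground-state transformation $\phi\mapsto\chi\phi$ to $\sii(\omega)$, boundedness of the derivatives of $\chi$ from elliptic regularity, the lower bound $\chi\gtrsim\dist(\cdot,\partial\omega)$, the Hardy inequality in $\omega$ to get a relative form bound smaller than one for small $|\epsilon|$, and then compactness of the resolvent from the compact embedding $H_0^1(\omega)\hookrightarrow\sii(\omega)$. The only differences are cosmetic: you rewrite $\Delta\chi/\chi$ via the eigenvalue equation (so you bound $\partial_\tau^2\chi$ instead of $\Delta\chi$) and invoke Hopf's lemma and KLMN where the paper cites Davies and Kato's stability theorem.
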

\begin{proof}
Let us introduce the unitary transform
$$
  U: \sii(\omega,\chi(x')^2 dx') \to \sii(\omega):
  \{\phi \mapsto \chi\phi\}
  \,,
$$
which is well defined because~$\chi$ is positive in~$\omega$.
Then~$h_\epsilon$ is unitarily equivalent to the operator
$\hat{h}_\epsilon := U h_\epsilon U^{-1}$ in $\sii(\omega)$.
The latter is the operator associated in $\sii(\omega)$ with the
quadratic form
$$
  \hat{q}_\epsilon[\psi] := \tilde{q}_\epsilon[U^{-1}\psi]
  \,, \qquad
  \Dom(\hat{q}_\epsilon) := U\Dom(\tilde{q}_\epsilon)
  \,.
$$
Notice that the space $C_0^\infty(\omega)$, which is a core of
$\tilde{q}_\epsilon$, is left invariant by both~$U$ and~$U^{-1}$.
For any $\psi \in C_0^\infty(\omega)$, by integrating by parts, it
is easy to verify
$$
  \hat{q}_\epsilon[\psi] =
  \|\nabla\psi\|_{\sii(\omega)}^2
  + (\psi,V\psi)_{\sii(\omega)}
  \,,
$$
where
$$
  V := \frac{\Delta\chi}{\chi}
  + \epsilon^2 \left(\frac{\partial_\tau\chi}{\chi}\right)^2
  \,.
$$
Without the potential~$V$, $\hat{q}_\epsilon$ would be just the form
associated with the Dirichlet Laplacian $-\Delta_D^\omega$
in~$\sii(\omega)$. The form domain of the latter is $H_0^1(\omega)$,
which is compactly embedded in~$\sii(\omega)$. It is thus enough to
show that~$V$ is a relatively form bounded perturbation of $-\Delta_D^\omega$
due to the stability result \cite[Thm.~VI.3.4]{Kato}.
To do so, we use several facts:
\begin{enumerate}
\item[(i)]
By standard elliptic regularity theory (see, \eg, \cite[Sec.~6.3]{Evans}),
we have $\chi \in H^4(\omega)$.
Consequently, $\nabla\chi \in H^3(\omega)$ and $\Delta\chi \in H^2(\omega)$.
Using the Sobolev embedding \cite[Thm.~5.4]{Adams}
$H^2(\omega) \hookrightarrow C^0(\overline{\omega})$,
we thus have
$\|\Delta\chi\|_\infty < \infty$
and $\|\partial_\tau\chi\|_\infty \leq a \|\nabla\chi\|_\infty < \infty$.
\item[(ii)]
For any domain~$\omega$ such that~$\partial\omega$ is of
class~$C^2$, there exists \cite[Lem.~4.6.1]{Davies_1989}
a positive number~$\alpha_0$ such that $\chi \geq \alpha_0 d$,
where $d(x'):=\dist(x',\partial\omega)$.
\item[(iii)]
For any strongly regular domain~$\omega$, which is in particular
satisfied under the present smoothness assumption, the Hardy
inequality $-\Delta_D^\omega \geq c_0/d^2$ holds true
\cite[Sec.~1.5]{Davies_1989}.
\end{enumerate}

Using~(i)--(ii), we have
\begin{align*}
  \left| \int_\omega \frac{\Delta\chi}{\chi} \, \psi^2 \right|
  &\leq \frac{\|\Delta\chi\|_\infty}{\alpha_0}
  \int_\omega \frac{\psi^2}{d}
  \leq \frac{\|\Delta\chi\|_\infty}{\alpha_0}
  \left(\delta \int_\omega \frac{\psi^2}{d^2}
  + \delta^{-1} \int_\omega \psi^2 \right)
  \,,
  \\
  \left| \int_\omega \epsilon^2
  \left(\frac{\partial_\tau\chi}{\chi}\right)^2 \psi^2 \right|
  &\leq \epsilon^2 \frac{\|\partial_\tau\chi\|_\infty^2}{\alpha_0^2}
  \int_\omega \frac{\psi^2}{d^2}
  \,,
\end{align*}
for any $\psi \in C_0^\infty(\omega)$ and $\delta>0$. 
Finally, using~(iii), we deduce
$$
  \left| \int_\omega V \psi^2 \right|
  \leq
  b \|\nabla\psi\|_{\sii(\omega)}^2
  + C \|\psi\|_{\sii(\omega)}^2
  \,,
$$
where
$$
  b := \frac{1}{c_0} \left(
  \delta \frac{\|\Delta\chi\|_\infty}{\alpha_0}
  + \epsilon^2 \frac{\|\partial_\tau\chi\|_\infty^2}{\alpha_0^2}
  \right)
  \,, \qquad
  C := \delta^{-1} \frac{\|\Delta\chi\|_\infty}{\alpha_0}
  \,.
$$
Hence, by taking~$\delta$ and~$\epsilon$ small enough, $V$~is a
relatively form bounded perturbation of~$-\Delta_D^\omega$ with the
relative bound~$b$ less than one.
\end{proof}
\begin{Proposition}\label{Prop.mu}
Under the hypothesis and notation of Lemma~\ref{Lem.compact},
$\mu_\epsilon > 0$ for every $|\epsilon| \in (0,\epsilon_0)$,
unless~$\omega$ is rotationally symmetric.
\end{Proposition}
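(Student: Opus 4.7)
My plan is to exploit the compactness furnished by Lemma~\ref{Lem.compact} and reduce the claim to a short contradiction argument. Under the hypothesis $|\epsilon| < \epsilon_0$, the operator $h_\epsilon$ has compact resolvent, so $\mu_\epsilon$ is attained as the lowest eigenvalue by some non-zero $\phi_* \in \Dom(\tilde{q}_\epsilon)$. The form is manifestly non-negative, hence $\mu_\epsilon \geq 0$, and it suffices to rule out the equality $\mu_\epsilon = 0$ whenever $\epsilon \neq 0$ and $\omega$ is not rotationally symmetric.

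Assume, for contradiction, that $\mu_\epsilon = 0$. Then $\tilde{q}_\epsilon[\phi_*] = 0$, and since the form is a sum of two non-negative contributions, both must vanish separately:
\[
  \int_\omega \chi^2 |\nabla' \phi_*|^2 \, dx' = 0
  \quad\text{and}\quad
  \epsilon^2 \int_\omega (\partial_\tau\chi)^2 \, \phi_*^2 \, dx' = 0.
\]
Since $\chi$ is the positive ground state of an elliptic problem in $\omega$, the strong maximum principle gives $\chi>0$ in the interior, so the first identity forces $\nabla'\phi_* = 0$ almost everywhere in $\omega$. Because $\omega$ is connected and $\phi_*$ is weakly differentiable (the form domain is identified via the unitary $U$ of the proof of Lemma~\ref{Lem.compact} with $H_0^1(\omega)$), this yields $\phi_* \equiv c$ for some constant $c$. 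Admissibility of constants is automatic: $c\chi \in H_0^1(\omega)$ since $\chi \in H_0^1(\omega)$. Non-triviality of $\phi_*$ gives $c \neq 0$.

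Using $\epsilon\neq 0$ and $c\neq 0$, the second identity then yields $\partial_\tau\chi = 0$ almost everywhere in $\omega$. This will contradict the hypothesis that~$\omega$ is not rotationally symmetric, by the argument already invoked in the remark following Theorem~\ref{Thm.local}: the function $\eta := \partial_\tau\chi$ satisfies the equation $(-\Delta' - \beta^2 \partial_\tau^2 - \lambda_1)\eta = 0$ in $\omega$, so if $\eta \equiv 0$ then $\chi$ is invariant under rotations; combined with $\chi = 0$ on $\partial\omega$ and $\chi > 0$ inside, this forces $\partial\omega$ to be a union of circles centred at the origin, whence~$\omega$ is rotationally symmetric.

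The only delicate step is the passage from $\chi\nabla'\phi_* = 0$ in $L^2(\omega)$ to $\phi_*$ being a classical constant. The issue is that $\phi_*$ lives a priori only in the abstract closure $\Dom(\tilde{q}_\epsilon)$, not in $C_0^\infty(\omega)$; but the identification $\Dom(\hat{q}_\epsilon) = H_0^1(\omega)$ provided by Lemma~\ref{Lem.compact} supplies exactly the weak-differentiability and admissibility of $c\chi$ needed to close the argument. The remaining two steps (deducing $\partial_\tau\chi \equiv 0$ and concluding rotational symmetry of $\omega$) are then immediate given the tools already in the paper.
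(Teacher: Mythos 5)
Your argument is correct and follows essentially the same route as the paper's proof: compactness from Lemma~\ref{Lem.compact} guarantees the infimum~\eqref{mu} is attained, vanishing of both non-negative terms forces the minimiser to be constant (using positivity of~$\chi$ and connectedness of~$\omega$), and then $\partial_\tau\chi\equiv 0$, which is only possible for a rotationally symmetric~$\omega$. Your extra care about the form-domain identification $\Dom(\hat{q}_\epsilon)=H_0^1(\omega)$ is a welcome but minor elaboration of a step the paper leaves implicit.
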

\begin{proof}
If $|\epsilon| < \epsilon_0$, the spectrum of~$h_\epsilon$ is
purely discrete. In particular, the spectral
threshold~$\mu_\epsilon$ is an eigenvalue and the infimum~\eqref{mu}
is attained by a corresponding eigenfunction $\phi \in
\sii(\omega,\chi(x')^2 dx')$, \ie,
$$
  \mu_\epsilon =
  \frac{\|\chi\nabla\phi\|_{\sii(\omega)}^2
  + \epsilon^2 \|\partial_\tau\chi \phi\|_{\sii(\omega)}^2}
  {\|\chi \phi\|_{\sii(\omega)}^2}
  \,.
$$
Assuming $\mu_\epsilon = 0$, it follows that
$\|\chi\nabla\phi\|_{\sii(\omega)}=0$ and $\|\partial_\tau\chi
\phi\|_{\sii(\omega)}=0$. From the first identity, since~$\chi$ is
positive, we deduce that~$\phi$ is constant. Putting this result
into the second identity, we conclude with
$\|\partial_\tau\chi\|_{\sii(\omega)}=0$, which is possible only
if~$\omega$ is rotationally symmetric with respect to the origin.
\end{proof}

\subsection{Uniform positivity in the cross-section}
We come back to~\eqref{Q12} and choose the parameters~$\eps$,
$\beta$ and~$a$ in such a way that~$c_1$ and~$c_2$ are positive.
Note that the constants~$c_1$ and~$c_2$ can become only more
positive if~$\|\epsilon\|_\infty$ further diminishes. Employing the
definition~\eqref{mu} and Fubini's theorem, we get
\begin{equation}\label{Q12.mu}
  Q[\psi] \geq
  \frac{1}{2} \|\partial_1\psi\|^2
  + \big\|\sqrt{\mu} \, \psi\big\|^2
  \,,
\end{equation}
where
$$
  \mu(x) := c_1 \, \mu_{\epsilon(x_1)}
  \qquad \mbox{with} \qquad
  \epsilon(x_1) :=  \sqrt{\frac{c_2}{c_1} \, \beta\eps(x_1) }
  \,.
$$
Let~$\omega$ be different from a disc or annulus. Let $|\eps|$ be
non-trivial and so small on a bounded interval $I \subset \Real$
such that $|\epsilon(x_1)| < \epsilon_0$
for almost every $x_1 \in I$. Then
we know by Proposition~\ref{Prop.mu} that $x_1 \mapsto \mu(x_1,x')$
is non-trivial and non-negative on~$I$ (by definition, $\mu(x)$ is
independent of~$x' \in \omega$). Consequently,
\begin{equation}\label{Q12.nu}
  Q[\psi] \geq
  \frac{1}{4} \|\partial_1\psi\|^2
  + \nu \|\psi\|_{\sii(I\times\omega)}^2
  \,,
\end{equation}
where~$\nu$ is the lowest eigenvalue of the one-dimensional
operator $-\frac{1}{4} \Delta_N^I + \mu$ in $\sii(I)$.
Note that~$\nu$ is positive because the potential~$\mu$
is non-trivial and non-negative.
Summing up, we have established the following crucial result.
\begin{Theorem}\label{Thm.local.better}
Let $\beta\eps \geq 0$ and $\beta\eps\not=0$.
Suppose that~$\omega$ is not rotationally invariant
and that its boundary~$\partial\omega$ is of class~$C^4$.
In addition to~\eqref{Ass}, assume that there exists
a bounded interval $I \subset \Real$ such that
\begin{equation}\label{Ass.better}
  0 < |\beta \epsilon(x_1)| < \frac{c_1}{c_2} \, \epsilon_0^2
  \qquad \mbox{for a.e.} \
  x_1 \in I
  \,,
\end{equation}
where~$c_1, c_2$ are the constants depending
on~$a$, $|\beta|$ and $\|\eps\|_\infty$
introduced in~\eqref{constants}
and~$\epsilon_0$ is the number depending
on the geometry of~$\omega$ from Lemma~\ref{Lem.compact}.
Then~\eqref{Q12.nu} holds
for every $\psi \in H_0^1(\Real\times\omega)$
with a positive number~$\nu$.
\end{Theorem}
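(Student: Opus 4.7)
The plan is to start from the estimate \eqref{Q12}---valid under the smallness hypothesis \eqref{Ass}---and to convert its two transverse terms into an effective potential in~$x_1$ by a slice-wise application of the variational principle \eqref{mu}. For a.e.\ fixed $x_1\in\Real$, the partial function $\phi(x_1,\cdot)\in H_0^1(\omega)$ is used as a trial function in \eqref{mu} with the $x_1$-dependent parameter
\[
  \epsilon(x_1) := \sqrt{\tfrac{c_2}{c_1}\,\beta\eps(x_1)}\,,
\]
which is real because $\beta\eps\ge 0$; this renormalisation is tuned so that the combination of the $c_1$ and $c_2$ terms in \eqref{Q12}, restricted to the slice $\{x_1\}\times\omega$, majorises $c_1\mu_{\epsilon(x_1)}\,\|\chi\phi(x_1,\cdot)\|_{\sii(\omega)}^2$. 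Integrating in~$x_1$ by Fubini and recalling $\psi=\chi\phi$, \eqref{Q12} becomes
\[
  Q[\psi]\ \geq\ \tfrac12\|\partial_1\psi\|^2 + \int_{\Real\times\omega} c_1\,\mu_{\epsilon(x_1)}\,|\psi(x)|^2\,dx\,.
\]

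Next I would invoke Proposition~\ref{Prop.mu}. The hypothesis \eqref{Ass.better} is tailor-made to ensure $|\epsilon(x_1)|^2<\epsilon_0^2$ with $\epsilon(x_1)\neq 0$ for a.e.\ $x_1\in I$; since $\omega$ is not rotationally invariant and $\partial\omega\in C^4$, Proposition~\ref{Prop.mu} (whose applicability rests on the compactness established in Lemma~\ref{Lem.compact}) then yields $\mu_{\epsilon(x_1)}>0$ for a.e.\ $x_1\in I$. Consequently the effective potential $\mu(x):=c_1\mu_{\epsilon(x_1)}$, which depends only on~$x_1$, is non-negative on $\Real\times\omega$ and strictly positive on a set of positive one-dimensional measure in~$I$.

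The last step reduces everything to a one-dimensional spectral problem on~$I$. I would split $\tfrac12\|\partial_1\psi\|^2=\tfrac14\|\partial_1\psi\|^2+\tfrac14\|\partial_1\psi\|^2$, keeping the first copy intact for the right-hand side of \eqref{Q12.nu}, and bounding the second copy together with the potential term on the strip $I\times\omega$ by Fubini in~$x'$: for each fixed $x'\in\omega$ the slice $u(x_1):=\psi(x_1,x')$ satisfies
\[
  \tfrac14\int_I|u'(x_1)|^2\,dx_1 + \int_I\mu(x_1)\,|u(x_1)|^2\,dx_1 \ \geq\ \nu\int_I |u(x_1)|^2\,dx_1\,,
\]
where $\nu$ is the lowest eigenvalue of the one-dimensional Schr\"odinger operator $-\tfrac14\Delta_N^I+\mu$ on $\sii(I)$ with Neumann boundary conditions. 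Integrating in $x'\in\omega$ then produces \eqref{Q12.nu}, and a standard density argument extends the inequality from the core $C_0^\infty(\Real\times\omega)$ to all of $H_0^1(\Real\times\omega)$.

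The main obstacle I expect is verifying $\nu>0$. This follows from a classical ground-state argument: the bounded interval~$I$ makes $-\tfrac14\Delta_N^I+\mu$ a compact-resolvent self-adjoint operator whose lowest eigenvalue is simple and attained by a non-negative eigenfunction~$u_0$. If $\nu=0$ held, then the identity $\tfrac14\|u_0'\|_{\sii(I)}^2+\int_I\mu\,u_0^2=0$ would force both terms to vanish, so $u_0$ would be constant on $I$ and simultaneously vanish on the positive-measure set $\{\mu>0\}$, a contradiction. Additional care is needed to confirm the measurability of $x_1\mapsto\mu_{\epsilon(x_1)}$ (which follows from the continuous and monotone dependence of $\mu_\epsilon$ on $\epsilon^2$) and to use Neumann rather than Dirichlet conditions at $\partial I$, which is the natural choice since $\psi$ carries no boundary condition at the endpoints of~$I$.
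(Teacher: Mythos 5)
Your proposal is correct and follows essentially the same route as the paper: from \eqref{Q12} one applies the variational characterisation \eqref{mu} slice-wise with $\epsilon(x_1)=\sqrt{(c_2/c_1)\,\beta\eps(x_1)}$ and Fubini to obtain the effective potential $\mu(x)=c_1\mu_{\epsilon(x_1)}$, invokes Proposition~\ref{Prop.mu} (via Lemma~\ref{Lem.compact}) on the interval~$I$ singled out by \eqref{Ass.better}, and then splits $\tfrac12\|\partial_1\psi\|^2$ to reduce to the lowest eigenvalue~$\nu$ of the Neumann operator $-\tfrac14\Delta_N^I+\mu$ on~$I$, whose positivity follows from $\mu$ being non-negative and non-trivial. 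Your added remarks on the measurability of $x_1\mapsto\mu_{\epsilon(x_1)}$ and the explicit ground-state argument for $\nu>0$ only fill in details the paper leaves implicit.
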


Notice that~\eqref{Q12.nu} is equivalent to the operator inequality
\begin{equation}\label{HI.local.better}
  H-\lambda_1 \geq
  \left(-\mbox{$\frac{1}{4}$} \Delta^\Real + \nu\,\chi_{I}\right)
  \otimes 1
  \,,
\end{equation}
where~$\chi_I$ is the characteristic function of~$I$
and the right hand side employs the Hilbert-space identification
$\sii(\Real\times\omega) \simeq \sii(\Real) \otimes \sii(\omega)$.
In particular, whenever~$\nu$ is positive,
we get another local Hardy-type inequality
\begin{equation}\label{HI.local.better.bis}
  H-\lambda_1 \geq
  \nu \,\chi_{I\times\omega}
  \,.
\end{equation}

\subsection{Global Hardy inequality}
It is known how to deduce from~\eqref{HI.local.better}
a global Hardy inequality
with help of the classical result~\eqref{Hardy.1D}
(see~\cite{K3} or \cite[Sec.~7.2]{KKolb}).
For the convenience of the reader and self-consistency,
we repeat the procedure here.
\begin{Theorem}\label{Thm.global}
Under the hypotheses of Theorem~\ref{Thm.local.better},
there exists a positive constant~$c$
depending on~$\beta$, the geometry of~$\omega$
and properties of~$\eps$ such that
\begin{equation}\label{Hardy.global}
  H - \lambda_1 \geq \frac{c}{1+x_1^2}
\end{equation}
holds in the form sense in $\sii(\Real\times\omega)$.
\end{Theorem}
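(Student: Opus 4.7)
The plan is to upgrade the local bound~\eqref{HI.local.better} to a global Hardy inequality by invoking the classical one-dimensional Hardy inequality~\eqref{Hardy.1D} in the longitudinal direction; this is precisely the strategy of~\cite{K3} and~\cite[Sec.~7.2]{KKolb}. Since \eqref{HI.local.better} is a tensor-product operator inequality on $\sii(\Real)\otimes\sii(\omega)$, it is enough to establish the purely one-dimensional inequality
\[
  -\tfrac{1}{4}\Delta^{\Real} + \nu\,\chi_I \;\geq\; \frac{c}{1+x_1^2}
\]
as forms on $\sii(\Real)$, with $c>0$ depending only on $\nu$ and on the length and position of $I$. Tensoring with the identity on $\sii(\omega)$ then yields~\eqref{Hardy.global}.

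For the 1D inequality, write $I=[\alpha,\beta]$ and, given $f\in C_0^\infty(\Real)$, split the integral $\int_\Real |f|^2/(1+x_1^2)\,dx_1$ into the three pieces $(-\infty,\alpha)$, $I$, $(\beta,\infty)$. On $I$ the bound is immediate since the weight $(1+x_1^2)^{-1}$ is bounded above by a constant and the potential term $\nu\int_I|f|^2$ is at disposal. On the right exterior I use $f(x_1)=f(\beta)+\int_\beta^{x_1}f'(s)\,ds$ together with the shifted Hardy inequality
\[
  \int_\beta^\infty |f'|^2\,dx_1 \;\geq\; \frac{1}{4}\int_\beta^\infty \frac{|f(x_1)-f(\beta)|^2}{(x_1-\beta)^2}\,dx_1,
\]
a direct consequence of~\eqref{Hardy.1D} applied to $f-f(\beta)$ on $(\beta,\infty)$, and the elementary comparison $(x_1-\beta)^{-2}\geq c_1 (1+x_1^2)^{-1}$ valid for $x_1\geq\beta+1$. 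The residual contributions $|f(\beta)|^2\int_\beta^\infty(1+x_1^2)^{-1}\,dx_1$ and its left analogue at $\alpha$ are controlled via the Sobolev trace embedding $H^1(I)\hookrightarrow C^0(\overline{I})$, which gives
\[
  |f(\alpha)|^2+|f(\beta)|^2 \;\leq\; C\bigl(\|f'\|_{\sii(I)}^2+\|f\|_{\sii(I)}^2\bigr),
\]
whose right-hand side is in turn absorbed into a small portion $\eta\in(0,1)$ of $\tfrac{1}{4}\|f'\|^2+\nu\|f\|_{\sii(I)}^2$. The remaining $(1-\eta)$-fraction still feeds the shifted Hardy inequality on each exterior component and produces the desired weight $c/(1+x_1^2)$.

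The only delicate point is the book-keeping in the last step: the small parameter $\eta$ must simultaneously absorb the two boundary-trace contributions and leave enough kinetic energy to extract the Hardy weight on the exterior intervals. Once this balancing is performed, the resulting constant $c$ depends explicitly on $\nu$, on $|I|$ and on the positions of the endpoints $\alpha,\beta$; tracing these back through the definitions of $\nu$, $c_1$, $c_2$ in~\eqref{constants} and of $\epsilon_0$ in Lemma~\ref{Lem.compact} finally yields the stated dependence of $c$ on $\beta$, on the geometry of $\omega$ and on the properties of~$\eps$.
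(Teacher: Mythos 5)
Your proof is correct and rests on the same engine as the paper's: the classical one-dimensional Hardy inequality~\eqref{Hardy.1D} applied in the longitudinal variable, combined with the local positivity $\frac14\|\partial_1\psi\|^2+\nu\|\psi\|^2_{\sii(I\times\omega)}$ coming from~\eqref{HI.local.better}. The implementation of the 1D step, however, differs. The paper splits $\psi=\eta\psi+(1-\eta)\psi$ with a Lipschitz cutoff vanishing at the \emph{centre} $x_1^0$ of~$I$ and applies~\eqref{Hardy.1D} on the punctured line, which yields the explicit inequality~\eqref{Hardy.classical} with concrete constants ($16$ and $2+64/|I|^2$) and requires no trace estimate; you instead reduce to a purely one-dimensional operator inequality, split the weight integral at the \emph{endpoints} of~$I$, apply a shifted Hardy inequality to $f-f(\text{endpoint})$ on each exterior half-line, and control the boundary values through the embedding $H^1(I)\hookrightarrow C^0(\overline{I})$, absorbing them into a fraction of the local term. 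Both routes give~\eqref{Hardy.global} with a constant of the same qualitative dependence; yours trades the explicit constants for a cleaner tensorized 1D statement at the cost of a trace constant depending on $|I|$. Two small points to tighten. First, since $f-f(\beta)$ tends to the nonzero constant $-f(\beta)$ at infinity, it is not in $H_0^1((\beta,\infty))$, so strictly you should invoke Hardy's integral inequality for $x_1\mapsto\int_\beta^{x_1}f'(s)\,ds$ (or a cutoff-and-limit argument) rather than~\eqref{Hardy.1D} verbatim --- the inequality you state is of course true. Second, restricting the comparison $(x_1-\beta)^{-2}\geq c_1(1+x_1^2)^{-1}$ to $x_1\geq\beta+1$ leaves the strip $(\beta,\beta+1)$, which lies \emph{outside}~$I$ and is therefore not covered by the potential term; fortunately the comparison holds on all of $(\beta,\infty)$ (on $(\beta,\beta+1)$ trivially, since there $(x_1-\beta)^{-2}\geq 1\geq(1+x_1^2)^{-1}$, and beyond by the argument you give), with $c_1$ depending only on the endpoint, so the gap is purely expository. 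Finally, avoid reusing the symbols $\alpha,\beta$ for the endpoints of~$I$: both already denote the coupling and twist parameters in the paper.
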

\begin{proof}
Let~$x_1^0$ denote the centre of the interval~$I$. 
The main ingredient in the proof is
the following Hardy-type inequality for a Schr\"odinger operator in
$\Real\times\omega$ with a characteristic-function potential:
\begin{equation}\label{Hardy.classical}
  \|\rho\psi\|^2
  \leq 16 \, \|\partial_1\psi\|^2
  + (2+64/|I|^2) \, \|\psi\|_{\sii(I\times\omega)}^2
\end{equation}
for every $\psi \in H_0^1(\Real\times\omega)$,
where we denote $\rho(x):=1/\sqrt{1+(x_1-x_1^0)^2}$.
This inequality is a consequence of~\eqref{Hardy.1D}.
Indeed, following~\cite[Sec.~3.3]{EKK},
let~$\eta$ be the Lipschitz function on~$\Real$ defined by
$\eta(x_1):=2|x_1-x_1^0|/|I|$ for $|x_1-x_1^0|\leq |I|/2$
and~$1$ otherwise.
For any $\psi \in C_0^\infty(\Real\times\omega)$,
let us write $\psi =\eta\psi+(1-\eta)\psi$,
so that $(\eta\psi)(\cdot,x') \in H_0^1(\Real\!\setminus\!\{x_0^1\})$
for every $x'\in\omega$.
Then, employing Fubini's theorem, we can estimate as follows:
\begin{align*}
  \|\rho\psi\|^2
  & \leq 2 \int_{\Real\times\omega} (x_1-x_1^0)^{-2} \, |(\eta\psi)(x)|^2 \, dx
  + 2 \, \|(1-\eta)\psi\|^2
  \\
  & \leq 8 \, \|\partial_1(\eta\psi)\|^2
  + 2 \, \|\psi\|_{\sii(I\times\omega)}^2
  \\
  & \leq 16 \, \|\eta \partial_1\psi\|^2
  + 16 \, \|(\partial_1{\eta})\psi\|^2
  + 2 \, \|\psi\|_{\sii(I\times\omega)}^2
  \\
  & \leq 16 \, \|\partial_1\psi\|^2
  + (2+64/|I|^2) \, \|\psi\|_{\sii(I\times\omega)}^2
  \,.
\end{align*}
By density, this result extends to all
$\psi\in H_0^1(\Real\times\omega)=\Dom(Q)=\Dom(h)$.

By Theorem~\ref{Thm.local.better},
we have
\begin{equation*}
  Q[\psi] \geq \frac{1-\delta}{4} \|\partial_1\psi\|^2
  + \frac{\delta}{4} \|\partial_1\psi\|^2
  + \nu \|\psi\|_{\sii(I\times\omega)}^2
\end{equation*}
for every $\psi \in H_0^1(\Real\times\omega)$ and $\delta\in(0,1]$,
where~$\nu$ is a positive number.
Neglecting the first term on the right hand side
and using~\eqref{Hardy.classical}, we get
\begin{equation*}
  Q[\psi] \geq \frac{\delta}{64} \|\rho\psi\|^2
  + \left(
  \nu - \frac{\delta}{32} (1+32/|I|^2)
  \right)
  \|\psi\|_{\sii(I\times\omega)}^2
  \,.
\end{equation*}
Employing the positivity of~$\nu$,
we choose
$
  \delta = \min\{1,32\nu/(1+32/|I|^2)\}
$
and thus obtain
\begin{equation*}
  H-\lambda_1
  \geq \frac{c'}{1+(x_1-x_1^0)^2}
\end{equation*}
with $c'=\delta/64$.

To conclude with~\eqref{Hardy.global}, we set
$$
  c := c' \min_{x_1 \in \Real} \frac{1+x_1^2}{1+(x_1-x_1^0)^2}
  \,,
$$
where the minimum is a positive constant depending on~$x_1^0$.
\end{proof}

In view of the unitary equivalence between~$H$ and $-\Delta_D^\Omega$
and since the longitudinal coordinate~$x_1$ is left invariant
by the rotation matrix in~\eqref{tube},
\eqref{Hardy.global}~is equivalent to the operator inequality
$$
  -\Delta_D^\Omega - \lambda_1
  \geq \frac{c}{1+x_1^2}
$$
in the form sense in $\sii(\Omega)$,
with the same constant~$c$.
This establishes Theorem~\ref{Thm.intro}
as a consequence of Theorem~\ref{Thm.global}
by noticing that $|x_1| \leq |x|$,
where, with an abuse of notation,
$|x|$~stand for the magnitude of the radial vector
in $\Real^3 \supset \Omega$.

\appendix
%------------------------------------------------------------%
\section{Absence of eigenvalues in a one-dimensional problem}\label{App}
%------------------------------------------------------------%
%
In this appendix we give a proof of Proposition~\ref{Prop.1D}.
Redefining~$\beta$ and~$\eps$ in~\eqref{H.1D},
we may assume, without loss of generality,
that the constant~$C_\omega$ is equal to~$1$
and consider in $\sii(\Real)$ just the Schr\"odinger operator
$H_\alpha := -\Delta^\Real + V_\alpha$,
$\Dom(H_\alpha) = \Dom(-\Delta^\Real) = H^2(\Real)$,
with the potential
$$
  V_\alpha := (\alpha\eps+\beta)^2 - \beta^2
  = \alpha^2\eps^2+2\alpha\beta\eps
  \,.
$$
We make the hypothesis that $\eps:\Real \to \Real$
is a continuous function of compact support
and $\beta \in \Real$.
Since the sign of~$\beta$ is not restricted,
we may assume, again without loss of generality,
that the coupling parameter~$\alpha$ is positive.
We prove Proposition~\ref{Prop.1D} by considering
the operator~$H_\alpha$ in the limit as $\alpha \to \infty$.

The support of~$\eps$ is a closure of a countable union of bounded
open intervals~$I_n$. Let $H_\alpha^{I_n}$ be the operator
in $\sii(I_n)$ that acts as~$H_\alpha$ inside~$I_n$
and satisfies the Neumann boundary conditions at~$\partial I_n$.
That is, $H_\alpha^{I_n} = -\Delta_N^{I_n} + V_\alpha$,
where $-\Delta_N^{I_n}$ is the Neumann Laplacian in $\sii(I_n)$.
Clearly,
\begin{equation}\label{bracketing}
  \inf\sigma(H_\alpha) \geq \min\left\{
  0, \inf_n \inf\sigma(H_\alpha^{I_n})
  \right\}
  \,.
\end{equation}
Here $\inf\sigma(H_\alpha^{I_n})$ is just the lowest eigenvalue
of~$H_\alpha^{I_n}$, since the latter is an operator with compact resolvent.
Our strategy to prove Proposition~\ref{Prop.1D}
is to show that each $\inf\sigma(H_\alpha^{I_n})$ is positive
for all sufficiently large~$\alpha$.

We shall need the following auxiliary result.
\begin{Lemma}\label{Lem.1D}
Let $-\infty<a<a'<b'<b<+\infty$. Then
$$
  \big[-\Delta_N^{(a,b)} + \alpha \chi_{(a',b')}\big]^{-1}
  \xrightarrow[\alpha \to \infty]{\mathrm{s}}
  \big[-\Delta_{ND}^{(a,a')}\big]^{-1}
  \oplus 0 \oplus
  \big[-\Delta_{DN}^{(b',b)}\big]^{-1}
  \,,
$$
where~$D,N$ stand for the Dirichlet and Neumann boundary conditions
at the respective parts of the interval and the direct-sums are with
respect to the decomposition $\sii((a,b)) \simeq \sii((a,a')) \oplus
\sii((a',b')) \oplus \sii((b',b))$.
\end{Lemma}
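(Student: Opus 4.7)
The plan is to invoke Kato's monotone convergence theorem for closed quadratic forms (Kato, \emph{Perturbation theory for linear operators}, Theorem~VIII.3.13a; see also Reed--Simon~I, Theorem~S.14). The operator on the left is the self-adjoint operator $H_\alpha$ on $\sii((a,b))$ associated with the non-negative closed quadratic form
$$
  q_\alpha[\psi] := \int_a^b |\psi'(x)|^2\,dx
  + \alpha \int_{a'}^{b'}|\psi(x)|^2\,dx\,,
  \qquad
  \Dom(q_\alpha) := H^1((a,b))\,.
$$
Since $\alpha\mapsto q_\alpha[\psi]$ is pointwise non-decreasing for every $\psi\in H^1((a,b))$, Kato's theorem yields strong convergence of the resolvents $(H_\alpha+1)^{-1}$ to $(H_\infty+1)^{-1}P$, where $H_\infty$ is the self-adjoint operator associated with the pointwise limit form $q_\infty$ and $P$ is the orthogonal projection in $\sii((a,b))$ onto the closure of $\{\psi : q_\infty[\psi]<\infty\}$.

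The key step is the explicit identification of $q_\infty$. Finiteness of $q_\infty[\psi]$ forces $\psi = 0$ almost everywhere on $(a',b')$, and by the one-dimensional Sobolev embedding $H^1\hookrightarrow C^0$ every admissible $\psi \in H^1((a,b))$ decomposes as $\psi_-\oplus 0\oplus \psi_+$ with $\psi_-\in H^1((a,a'))$ satisfying $\psi_-(a')=0$ and $\psi_+\in H^1((b',b))$ satisfying $\psi_+(b')=0$. These are exactly the form domains of $-\Delta_{ND}^{(a,a')}$ and $-\Delta_{DN}^{(b',b)}$, and on this domain $q_\infty[\psi] = \|\psi_-'\|^2_{\sii((a,a'))} + \|\psi_+'\|^2_{\sii((b',b))}$ coincides with the direct sum of the corresponding Dirichlet integrals. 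Hence $H_\infty = -\Delta_{ND}^{(a,a')}\oplus -\Delta_{DN}^{(b',b)}$ acting on $\sii((a,a'))\oplus \sii((b',b))$, while $P$ annihilates the middle factor $\sii((a',b'))$.

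To pass from resolvents at $z=-1$ to the inverses appearing in the statement, observe that for every $\alpha>0$ the operator $H_\alpha$ is strictly positive (testing with the constant function~$1$ gives $q_\alpha[1]=\alpha(b'-a')>0$), and both $-\Delta_{ND}^{(a,a')}$ and $-\Delta_{DN}^{(b',b)}$ have strictly positive spectrum thanks to the presence of a Dirichlet endpoint. Thus $0$ lies in the common resolvent set of $H_\alpha$ and of $H_\infty$ restricted to $\mathrm{Ran}\,P$, so strong convergence of $(H_\alpha-z)^{-1}$ holds for $z=0$ as well, and the limit $H_\infty^{-1}P$ is exactly the bounded operator $[-\Delta_{ND}^{(a,a')}]^{-1}\oplus 0 \oplus [-\Delta_{DN}^{(b',b)}]^{-1}$ appearing in the statement. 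The only genuinely delicate point is the identification of $\overline{\{\psi:q_\infty[\psi]<\infty\}}$ with $\sii((a,a'))\oplus\{0\}\oplus\sii((b',b))$, which is what produces the zero in the middle summand and encodes the ``infinite barrier'' intuition; everything else is the standard form monotone-convergence machinery.
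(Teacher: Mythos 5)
Your proof is correct in substance but follows a genuinely different route from the paper. You invoke the Kato--Simon monotone convergence theorem for non-decreasing families of closed forms and then identify the limit form: finiteness of $\sup_\alpha q_\alpha[\psi]$ forces $\psi=0$ on $(a',b')$, hence (by continuity of $H^1$ functions) Dirichlet conditions at $a'$ and $b'$, so that $H_\infty=-\Delta_{ND}^{(a,a')}\oplus-\Delta_{DN}^{(b',b)}$ and the projection $P$ kills the middle summand --- this identification is sound and is indeed the heart of the matter. The paper instead argues by hand: it writes the weak formulation of the resolvent equation for $\psi_\alpha=[H_\alpha+1]^{-1}F$, tests with $\psi_\alpha$ to get a uniform $H^1$ bound together with $\alpha\|\psi_\alpha\|_{\sii((a',b'))}^2\leq C$, extracts a weakly convergent subsequence, uses the compact embedding $H^1\hookrightarrow\sii$ and $H^1\hookrightarrow C^0$ to see the limit vanishes on $[a',b']$, identifies it as the solution of the mixed Neumann--Dirichlet boundary-value problem, and concludes by uniqueness of the limit point. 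Your argument is shorter and more abstract (and yields for free the monotonicity of the eigenvalues), while the paper's is elementary and self-contained. One point in your last step deserves a line more: passing from strong convergence of $(H_\alpha+1)^{-1}$ to that of $H_\alpha^{-1}$ is not automatic from ``$0$ lies in every resolvent set''; you need a spectral gap at $0$ \emph{uniform in $\alpha$}, which here follows from form monotonicity, $\inf\sigma(H_\alpha)\geq\inf\sigma(H_1)>0$ for $\alpha\geq1$, after which the transfer follows, e.g., from the Neumann series $H_\alpha^{-1}=\sum_{k\geq1}(H_\alpha+1)^{-k}$ (convergent in norm uniformly in $\alpha$, each term converging strongly). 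Note that the paper itself quietly proves the statement for the shifted resolvent $(H_\alpha+1)^{-1}$, which is what is actually needed for Corollary~\ref{Corol.1D}, so your attention to the shift is, if anything, more careful than the original.
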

\begin{proof}
Given $F \in \sii((a,b))$, let~$\psi_\alpha := [-\Delta_N^{(a,b)} +
\alpha \chi_{(a',b')}+1]^{-1} F$. It satisfies the weak formulation
of the resolvent equation
\begin{equation}\label{weak}
  %\forall v \in H^1((a,b)) \,, \qquad
  \int_a^b \bar{v}'(x) \, \psi_\alpha'(x) \,dx
  + \alpha \int_{a'}^{b'} \bar{v}(x) \, \psi_\alpha(x) \,dx
  + \int_{a}^{b} \bar{v}(x) \, \psi_\alpha(x) \,dx
  = \int_{a}^{b} \bar{v}(x) \, F(x) \,dx
\end{equation}
for every $v \in H^1((a,b))$.
Choosing $v=\psi_\alpha$, we get
$$
  \int_a^b |\psi_\alpha'(x)|^2 \,dx
  + \alpha \int_{a'}^{b'} |\psi_\alpha(x)|^2 \,dx
  + \int_{a}^{b} |\psi_\alpha(x)|^2 \,dx
  = \int_{a}^{b} \bar{\psi}_\alpha(x) \, F(x) \,dx
  \,,
$$
from which we deduce
$$
  \|\psi_\alpha\|^2_{H^1((a,b))} + \alpha \, \|\psi_\alpha\|^2_{\sii((a',b'))}
  \leq \|\psi_\alpha\|_{\sii((a,b))} \|F\|_{\sii((a,b))}
  \,.
$$
It follows that $\{\psi_\alpha\}_{\alpha>0}$
is a bounded family in $H^1((a,b))$
and therefore precompact in the weak topology of this space.
Let~$\psi_\infty$ be a weak limit point as $\alpha\to\infty$.
That is, for an increasing sequence of positive numbers
$\{\alpha_k\}_{k \in \Nat}$
such that $\alpha_k \to \infty$ as $k \to \infty$,
$\{\psi_{\alpha_k}\}_{k\in\Nat}$ converges weakly in
$H^1((a,b))$ to~$\psi_\infty$.
In fact, we may assume that it converges strongly in $\sii((a,b))$,
because the embedding $H^1((a,b))
\hookrightarrow \sii((a,b))$ is compact.
Dividing by~$\alpha_k$, we also see that
$$
  \|\psi_{\alpha_k}\|^2_{\sii((a',b'))} \longrightarrow 0
$$
as $k \to \infty$. Consequently, $\psi_\infty=0$ on $[a',b']$
(recall the embedding $H^1((a,b)) \hookrightarrow C^0([a,b])$). In
particular, $\psi_\infty$~satisfies the Dirichlet boundary
conditions at~$a',b'$. Choosing in~\eqref{weak} a test function~$v$
that vanish on $[a',b']$, restricting to the
subsequence~$\psi_{\alpha_k}$ and taking the limit $k \to \infty$,
we get that~$\psi_\infty$ is a solution to the boundary-value
problem
$$
\left\{
\begin{aligned}
  -\psi_\infty''+\psi_\infty &= F
  & \mbox{in}\quad & (a,a')\cup(b',b)
  \,, \\
  \psi_\infty' &= 0
  & \mbox{at}\quad & a,b
  \,, \\
  \psi_\infty &= 0
  & \mbox{at}\quad & a',b'
  \,. \\
\end{aligned}
\right.
$$
We have thus verified $
  \psi_\infty =
  \big[-\Delta_{ND}^{(a,a')}\big]^{-1} F
  \oplus 0 \oplus
  \big[-\Delta_{DN}^{(b',b)}\big]^{-1} F
$. The same limit is obtained for \emph{any} weak limit point of
$\{\psi_\alpha\}_{\alpha>0}$. Consequently,
$\psi_\alpha$~converges strongly in $\sii((a,b))$ to~$\psi_\infty$
as $\alpha \to \infty$, which was to be proved.
\end{proof}

Since the resolvents of Lemma~\ref{Lem.1D} are compact, we get
convergence of eigenvalues, in particular:
\begin{Corollary}\label{Corol.1D}
$$
  \inf\sigma\big(-\Delta_N^{(a,b)} + \alpha \chi_{(a',b')}\big)
  \ \xrightarrow[\alpha \to +\infty]{} \
  \min\left\{
  \left(\frac{\pi}{2(a'-a)}\right)^2,\left(\frac{\pi}{2(b-b')}\right)^2
  \right\}
  \,.
$$
\end{Corollary}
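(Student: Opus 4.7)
The plan is to combine Lemma~\ref{Lem.1D} with a direct min-max argument that splits the claim into matching upper and lower bounds. Write $\lambda_\alpha := \inf\sigma(H_\alpha)$ with $H_\alpha := -\Delta_N^{(a,b)} + \alpha\chi_{(a',b')}$, and denote by $E^{ND} := (\pi/(2(a'-a)))^2$ and $E^{DN} := (\pi/(2(b-b')))^2$ the lowest eigenvalues of the limiting operators $-\Delta_{ND}^{(a,a')}$ and $-\Delta_{DN}^{(b',b)}$ from Lemma~\ref{Lem.1D}. Monotonicity of the form in the coupling constant implies that $\alpha \mapsto \lambda_\alpha$ is non-decreasing, so that $\lambda_\infty := \lim_{\alpha\to\infty}\lambda_\alpha$ exists in $[0,\infty]$.

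For the upper bound, I would take the normalized ground state $\varphi^{ND}$ of $-\Delta_{ND}^{(a,a')}$, which vanishes at~$a'$, and extend it by zero to $(a,b)$. The resulting function lies in $H^1((a,b))$, is therefore a valid trial function for the Neumann form of $H_\alpha$, and annihilates the potential $\alpha\chi_{(a',b')}$; hence the Rayleigh quotient equals $E^{ND}$ independently of~$\alpha$. The analogous trial function supported on $(b',b)$ gives $\lambda_\alpha \leq E^{DN}$, and so $\lambda_\infty \leq \min\{E^{ND},E^{DN}\}$.

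For the lower bound, let $\psi_\alpha$ denote the normalized ground state of $H_\alpha$. The eigenvalue identity
$$
  \int_a^b |\psi_\alpha'(x)|^2\,dx
  + \alpha\int_{a'}^{b'}|\psi_\alpha(x)|^2\,dx = \lambda_\alpha
$$
together with the uniform upper bound on $\lambda_\alpha$ shows that $\{\psi_\alpha\}$ is bounded in $H^1((a,b))$ while $\int_{a'}^{b'}|\psi_\alpha|^2 \to 0$. By Banach--Alaoglu combined with the compact embedding $H^1((a,b)) \hookrightarrow \sii((a,b))$, a subsequence converges weakly in $H^1$ and strongly in $\sii$ to some $\psi_\infty$ with $\|\psi_\infty\|_{\sii((a,b))}=1$ and $\psi_\infty \equiv 0$ on $[a',b']$. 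The traces of $\psi_\infty$ at $a'$ and $b'$ therefore vanish, so $\psi_\infty|_{(a,a')}$ and $\psi_\infty|_{(b',b)}$ lie in the form domains of $-\Delta_{ND}^{(a,a')}$ and $-\Delta_{DN}^{(b',b)}$, respectively. Weak lower semicontinuity of the Dirichlet integral and the variational characterization of $E^{ND}, E^{DN}$ then yield
$$
  \lambda_\infty \geq \liminf_{\alpha\to\infty}\int_a^b |\psi_\alpha'|^2
  \geq \int_a^{a'}|\psi_\infty'|^2 + \int_{b'}^b|\psi_\infty'|^2
  \geq \min\{E^{ND},E^{DN}\}\,\|\psi_\infty\|_{\sii((a,b))}^2,
$$
matching the upper bound.

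The main technical point is the last display: one must ensure that the weak limit $\psi_\infty$ is nontrivial, which is secured precisely by the Rellich compactness on the bounded interval $(a,b)$. Alternatively, one could invoke a general discrete-spectrum stability result directly from the strong resolvent convergence supplied by Lemma~\ref{Lem.1D}, but the elementary min-max route above avoids further abstract machinery and keeps the argument self-contained.
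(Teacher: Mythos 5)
Your argument is correct, but it proves the corollary by a genuinely different route than the paper. The paper treats the corollary as an immediate consequence of Lemma~\ref{Lem.1D}: since the resolvents involved are compact and converge strongly, the bottom of the spectrum converges to that of the limiting operator $\big[-\Delta_{ND}^{(a,a')}\big]^{-1}\oplus 0\oplus\big[-\Delta_{DN}^{(b',b)}\big]^{-1}$, whose lowest eigenvalue is exactly $\min\{(\pi/(2(a'-a)))^2,(\pi/(2(b-b')))^2\}$. You instead give a two-sided variational proof: the upper bound by extending the $ND$ and $DN$ ground states by zero across $(a',b')$ (so the Rayleigh quotient is $\alpha$-independent), and the lower bound by extracting an $H^1$-weakly, $\sii$-strongly convergent subsequence of normalized ground states $\psi_\alpha$, using $\alpha\int_{a'}^{b'}|\psi_\alpha|^2\leq\lambda_\alpha\leq\min\{E^{ND},E^{DN}\}$ to force $\psi_\infty=0$ on $[a',b']$, and then weak lower semicontinuity plus the variational characterization of $E^{ND},E^{DN}$. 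In effect you do not use Lemma~\ref{Lem.1D} at all (the limit operators only serve to name the explicit constants), and you partially reprove its compactness step inside your lower bound; what you gain is a self-contained elementary argument that sidesteps the (unstated in the paper) justification that strong resolvent convergence here does yield convergence of the spectral bottom — a point which in general requires an extra ingredient such as the monotonicity in $\alpha$ that you invoke, or uniform compactness. As a by-product your trial-function bound shows $\lambda_\alpha\leq\min\{E^{ND},E^{DN}\}$ for every finite $\alpha$, so the convergence is monotone from below. All individual steps check out: the zero extension of a function vanishing at $a'$ is indeed in $H^1((a,b))$ and hence admissible for the Neumann form; the ground state $\psi_\alpha$ exists because $H_\alpha$ has compact resolvent; strong $\sii$ convergence preserves the normalization so $\psi_\infty\not\equiv0$; and the one-dimensional embedding $H^1((a,b))\hookrightarrow C^0([a,b])$ justifies the vanishing traces at $a'$ and $b'$, so the restrictions lie in the $ND$ and $DN$ form domains as claimed.
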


Now, recalling~\eqref{bracketing},
fix~$n$ and consider the lowest eigenvalue
$\inf\sigma(H_\alpha^{I_n})$ of~$H_\alpha^{I_n}$. Let~$I_n'$ be any
open subinterval of~$I_n$ such that $\overline{I_n'} \subset I_n$.
Then there exist positive constants $c_n=c_n(\beta,\eps
\upharpoonright I_n')$ and $\alpha_n=\alpha_n(\beta,\eps
\upharpoonright I_n')$ such that $V_\alpha \geq c_n \, \alpha^2$ on
$I_n'$ for all $\alpha \geq \alpha_n$. At the same time, $V_\alpha
\geq -\beta^2$ on the whole interval~$I_n$. Consequently,
$$
  H_\alpha^{I_n} \geq
  -\Delta_N^{I_n} + c_n \, \alpha^2 \, \chi_{I_n'}
  -\beta^2 \, \chi_{I_n \setminus I_n'}
  \,,
$$
where on the right hand side there is an operator to which
Lemma~\ref{Lem.1D} and its Corollary~\ref{Corol.1D} apply. It
follows that, choosing~$I_n'$ in such a way that~$|I_n \setminus
I_n'|$ is sufficiently small, we get that $\inf\sigma(-\Delta_N^{I}
+ c_n \, \alpha^2 \, \chi_{I_n'})$ will be larger than $\beta^2$ for
all sufficiently large~$\alpha$, and thus
$\inf\sigma(H_\alpha^{I_n})$ positive.

In general, the problem is that the constants measuring the largeness
of~$\alpha$ depend on~$n$, so that we have no uniform control over
the infimum appearing in~\eqref{bracketing}. This problem of course
does not appear under the hypothesis of Proposition~\ref{Prop.1D}
that the support of~$\eps$ is just a
closure of a \emph{finite} union of open intervals
(recall that in general it is just a \emph{countable} union).
Therefore we get the desired result under this extra assumption.

%----------------------------------------------------------------%
\section{The Neumann bracketing lowers the spectrum too much}\label{App.Neumann}
%----------------------------------------------------------------%
%
For the straight tube (\ie\ $\beta=0=\eps$),
imposing an extra Neumann condition at $\{x_1=0\}$
does not change the spectrum of the Dirichlet Laplacian in~$\Omega$.
This fact is used in \cite{K6-with-erratum,KZ1}
to go from the positivity of~$H-E_1$
for a locally twisted tube (\ie\ $\beta=0$)
to the Hardy inequality~\eqref{Hardy.tube}.
In this appendix we demonstrate that
this trick does not seem to be useful
for periodically twisted tubes (\ie\ $\beta\not=0$)
investigated in this paper.
Indeed, by imposing the supplementary Neumann condition at $\{x_1=0\}$,
one creates a spectrum below~$\lambda_1$ in~\eqref{essential}.

Imposing the ``Neumann condition''
at $\{x_1=0\}$ of the tube~\eqref{tube} means
that one considers instead of~$H$
the operator~$H^N$ which is associated in
$\sii(\Real\times\omega)$ with the form~$h^N$ that acts as~$h$
in~\eqref{form} but has a larger domain
$$
  \Dom(h^N) := \Dom_- \oplus \Dom_+
  \,,
$$
where~$\Dom_+$ (respectively, $\Dom_-$) denotes the set of functions
from $H_0^1(\Real\times\omega)$ restricted to~$(0,\infty)\times\omega$
(respectively, $(-\infty,0)\times\omega$).
Obviously, $H \geq H^N$.
However, this comparison is not much useful
in view of the following result.
\begin{Proposition}
Let $\beta \not= 0$ and $\eps=0$. Then
$$
  \inf\sigma(H^N) < \lambda_1
  \,.
$$
\end{Proposition}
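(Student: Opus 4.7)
The plan is a variational one: exhibit a trial function $\psi \in \Dom(h^N)$ with $h^N[\psi] < \lambda_1 \|\psi\|^2$. The point of the extended form domain is that we may take $\psi$ with a jump at $\{x_1=0\}$; it is convenient to take $\psi$ supported in the half-tube $(0,\infty) \times \omega$ (extended by zero to the left), which automatically places it in $\Dom_+ \subset \Dom(h^N)$ provided its lateral trace vanishes. I will tacitly assume that~$\omega$ is not rotationally invariant about the origin, since otherwise $\partial_\tau\chi \equiv 0$, the twist has no effect, and the assertion fails.

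The proposed ansatz on the half-tube is
\[
  \psi(x_1,x') := \chi(x')\, \phi_R(x_1) + \mu\, \eta(x')\, \xi(x_1), \qquad x_1 > 0,
\]
where $\phi_R$ is a linear cut-off equal to $1$ on $[0,R]$ and decaying linearly to $0$ on $[R,2R]$; $\xi \in C_c^\infty([0,R/2))$ with $\xi(0)=1$; $\eta \in H_0^1(\omega)$ satisfies $(\chi,\eta)_{\sii(\omega)} = 0$ and $B := \int_\omega (\partial_\tau\chi)\, \eta\, dx' \neq 0$; and $\mu \in \Real$ is a parameter to be optimised at the end. Such $\eta$ exists under the standing hypothesis: the identity $\int_\omega \chi\, \partial_\tau\chi = \tfrac12 \int_\omega \partial_\tau(\chi^2) = 0$ (available because $\chi|_{\partial\omega}=0$) forces $\partial_\tau\chi$ not to be a scalar multiple of $\chi$ in $\sii(\omega)$, and I obtain $\eta$ by Gram--Schmidting any $H_0^1(\omega)$-approximant of $\partial_\tau\chi$ against $\chi$.

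The heart of the argument is the computation of $Q[\psi] := h^N[\psi] - \lambda_1 \|\psi\|^2$ and the tracking of which $\mu$-cross-terms survive. Thanks to $(\chi,\eta)=0$ and $\int \chi \partial_\tau \chi = \int \eta \partial_\tau \eta = 0$, the contributions $\|\partial_1\psi\|^2 + \beta^2\|\partial_\tau\psi\|^2 + \|\nabla'\psi\|^2$ and $\|\psi\|^2$ split into independent $\phi_R$- and $\xi$-blocks, modulo cross-terms. The order-$\mu$ cross-term in the transverse energy is
\[
  2\mu \int_0^\infty \phi_R\xi\, dx_1 \left[ \int_\omega \nabla'\chi \cdot \nabla'\eta + \beta^2 \int_\omega \partial_\tau\chi\, \partial_\tau\eta \right],
\]
which \emph{vanishes} because testing the eigenvalue equation $(-\Delta' - \beta^2 \partial_\tau^2)\chi = \lambda_1 \chi$ against $\eta$ (and using $\eta|_{\partial\omega}=0$ for boundary-free integration by parts) produces $\int \nabla'\chi \cdot \nabla'\eta + \beta^2 \int \partial_\tau\chi\, \partial_\tau\eta = \lambda_1 (\chi,\eta) = 0$. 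The only surviving $\mu$-linear contribution comes from $-2\beta(\partial_1\psi, \partial_\tau\psi)$, and equals $2\mu\beta B \int_0^\infty (\phi_R'\xi - \phi_R\xi')\, dx_1$; integration by parts in $x_1$ together with the disjointness $\supp\phi_R' \cap \supp\xi = \emptyset$ collapses this to the Neumann-boundary term $2\mu\beta B\, \phi_R(0)\xi(0) = 2\mu\beta B$. Consequently,
\[
  Q[\psi] = \|\phi_R'\|^2 + 2\mu\beta B + \mu^2\, q(\xi), \qquad q(\xi) := \|\eta\|^2 \|\xi'\|^2 + \bigl(\|\nabla'\eta\|^2 + \beta^2\|\partial_\tau\eta\|^2 - \lambda_1\|\eta\|^2\bigr)\|\xi\|^2,
\]
and $q(\xi)>0$ follows from $\eta \perp \chi$ and the simplicity of~$\lambda_1$ as the lowest eigenvalue of $-\Delta_D^\omega - \beta^2 \partial_\tau^2$.

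Optimising over $\mu$ by taking $\mu = -\beta B/q(\xi)$ gives $Q[\psi] = 1/R - \beta^2 B^2/q(\xi)$. Since $\beta B \neq 0$ while $\|\phi_R'\|^2 = 1/R$ can be made arbitrarily small, choosing $R$ sufficiently large forces $Q[\psi]<0$, hence $\inf\sigma(H^N) < \lambda_1$. The main obstacle is purely book-keeping in the expansion of $Q[\psi]$: one must verify that every order-$\mu$ cross-term save for the Neumann-boundary contribution $2\mu\beta B\, \phi_R(0)\xi(0)$ cancels, which hinges on the eigenvalue equation for~$\chi$ and the skew-symmetry of~$\partial_\tau$ on $H_0^1(\omega)$. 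Once this algebraic cancellation is in hand, the conclusion of the trial-function argument is immediate.
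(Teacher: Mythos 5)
Your argument is correct, and at its core it shares the paper's strategy: a two-term variational trial function whose twist-induced cross term $-2\beta(\partial_1\cdot,\partial_\tau\cdot)$ produces a sign-definite linear contribution precisely because the relaxed condition at $\{x_1=0\}$ makes a boundary term there available, while the quasi-mode part $\chi\phi_R$ costs only $\|\phi_R'\|^2=1/R$. The implementation, however, is genuinely different: the paper perturbs $\varphi_n\chi$ by the explicit function $\rho(x_1)\tau(x')\chi(x')$ built from the angular variable $\tau=-\arctan(x_3/x_2)$, supported on the other side of the cut, and then tunes a small coupling $\delta$, whereas you work entirely in the half-tube component $\Dom_+$, take an abstract $\eta\in H_0^1(\omega)$ with $(\chi,\eta)_{\sii(\omega)}=0$ and $B=\int_\omega\eta\,\partial_\tau\chi\neq0$, and optimise $\mu$ exactly; I checked your bookkeeping (the $\mu$-linear terms killed by $(\chi,\eta)=0$, the weak eigenvalue equation for $\chi$, skew-symmetry of $\partial_\tau$ on $H_0^1(\omega)$ and the disjoint supports of $\phi_R'$ and $\xi$; the surviving boundary term $2\mu\beta B\,\phi_R(0)\xi(0)$; the identity $h^N[\psi]-\lambda_1\|\psi\|^2=1/R+2\mu\beta B+\mu^2 q(\xi)$) and it is sound. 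Your route buys robustness: it avoids the multivalued angle $\tau$, whose single-valued branches are discontinuous inside $\omega$ unless the branch cut can be placed outside $\overline{\omega}$ (a point the paper's proof glosses over), at the price of explicitly assuming $\partial_\tau\chi\not\equiv0$, i.e.\ that $\omega$ is not rotationally invariant --- which you correctly flag, and which is in fact necessary for the statement itself, since for a disc or annulus centred at the origin an angular-mode decomposition gives $\inf\sigma(H^N)=\lambda_1$. Two small polishing remarks: simplicity of $\lambda_1$ is not needed for $q(\xi)>0$, because the coefficient of $\|\xi\|^2$ is already nonnegative by the variational characterisation of $\lambda_1$ on $H_0^1(\omega)$ and $\|\eta\|^2\|\xi'\|^2>0$ suffices; and one should fix $\xi$ once and for all (say with support in $[0,1)$) so that $q(\xi)$ is independent of $R$ when letting $R\to\infty$.
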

\begin{proof}
The proof is based on the variational idea to construct a test
function $\psi\in\Dom(h^N)$ such that
$$
  Q^N[\psi] := h^N[\psi] - \lambda_1 \|\psi\|^2 < 0
  \,.
$$
First, we check that~$Q^N$ can be made asymptotically vanishing for
a suitably chosen sequence of test functions. Let
$\varphi:\Real\to\Real$ be a smooth function such that
$\varphi(x_1)=1$ for $x_1 \in [-1,1]$ and $\varphi(x_1)=0$ for
$|x_1| \geq 2$. Given any natural number $n \geq 1$, set
$\varphi_n(x_1):=\varphi(x_1/n)$. Note that $\varphi_n\chi \in
\Dom(h) \subset \Dom(h^N)$. Then
$$
  Q^N[\varphi_n\chi] = \|\dot\varphi_n\chi\|^2
  - 2 (\dot\varphi_n\chi,\beta\varphi_n\partial_\tau\chi)
  = \|\dot\varphi_n\|_{\sii(\Real)}^2
  =  \frac{1}{n}  \|\dot\varphi\|_{\sii(\Real)}^2
  \xrightarrow[n\to\infty]{} 0
  \,.
$$
Here the second equality follows by an integration by parts and the
normalisation of~$\chi$. Next, we add a small perturbation
$$
  \psi(x) := \varphi_n(x_1)\chi(x') + \delta \, \phi(x)
  \,,
$$
where~$\delta$ is a small real parameter and $\phi \in \Dom(h^N)$
will be specified later. Choosing~$\phi$ real-valued, we can write
\begin{equation}\label{3terms}
  Q^N[\psi] = Q^N[\varphi_n\chi]
  + 2 \, \delta \, Q^N(\phi,\varphi_n\chi)
  + \delta^2 \, Q^N[\phi]
  \,.
\end{equation}
Now we specify
$$
  \phi(x) := \rho(x_1) \tau(x') \chi(x')
  \,, \qquad \mbox{where} \qquad
  \tau(x') := -\arctan\frac{x_3}{x_2}
$$
is the angular variable and~$\rho$ is a real-valued function
supported in $[-1,0]$ such that $\rho(0)\not=0$. Note that~$\phi$
belongs to~$\Dom(h^N)$, although it does not belong to~$\Dom(h)$.
Integrating by parts in transverse variables and employing the
eigenvalue equation~$\chi$ satisfies, it is easy to check
$$
  Q^N(\phi,\varphi_n\chi)
  = (\partial_1\phi,\dot\varphi_n\chi)
  - (\partial_1\phi,\beta\varphi_n\partial_\tau\chi)
  + (\beta\phi,\dot\varphi_n\partial_\tau\chi)
  \,.
$$
Since~$\rho$ is supported in the interval where~$\varphi_n=1$, the
first and last integrals on the right hand side equal zero.
Integrating by parts in the remaining integral and using the
normalisation of~$\chi$, we conclude with
$$
  Q^N(\phi,\varphi_n\chi)
  = - (\partial_1\phi,\beta\varphi_n\partial_\tau\chi)
  = -\beta \rho(0) \int_\omega \tau \chi \partial_\tau\chi
  =  \frac{\beta\rho(0)}{2}
  \,.
$$
To fix the sign of the result, let us choose  $\rho(0)=
- \beta$. Then the mixed term on the right hand side
of~\eqref{3terms} is negative and independent of~$n$ for all
positive~$\delta$. Moreover, the sum with the last term can be
guaranteed to remain negative by choosing~$\delta$ sufficiently
small. Finally, we choose~$n$ sufficiently large in order to make
the sum of all terms in~\eqref{3terms} negative.
\end{proof}
%

%---------------------------%
\subsection*{Acknowledgment}
%---------------------------%
%
The research of D.K.\ was partially supported
by the project RVO61389005 and the GACR grant No.\ 14-06818S.
The author also acknowledges the award from 
the \emph{Neuron fund for support of science},
Czech Republic.

%\newpage
%--------------%
% BIBLIOGRAPHY %
%--------------%
%
%\addcontentsline{toc}{section}{References}
\bibliography{bib}
\bibliographystyle{amsplain}

\end{document}